\newtheorem{thr}{Theorem}
\newtheorem{conjecture}[thr]{Conjecture}
\newtheorem{claim}[thr]{Claim}
\newtheorem{prop}[thr]{Proposition}
\theoremstyle{definition}
\newtheorem{cor}[thr]{Corollary}
\newcommand*{\myproofname}{Proof}
\newenvironment{claimproof}[1][\myproofname]{\begin{proof}[#1]}{\end{proof}}
\newcommand{\eps}{\varepsilon}
\title{Maximising line subgraphs of diameter at most $t$\footnote{A preliminary version of this paper appeared as an extended abstract in {\em Proceedings of the 11th European Conference on Combinatorics, Graph Theory and Applications (EuroComb 2021, Barcelona)}, {\em Trends in Mathematics} 14: 331-338, 2021. \url{https://doi.org/10.1007/978-3-030-83823-2_52}}}
\author{
	Stijn Cambie
	\thanks{Department of Mathematics, . 
		Email: \protect\href{mailto:stijn.cambie@hotmail.com}{\protect\nolinkurl{stijn.cambie@hotmail.com}}, \protect\href{mailto:r.deverclos@math.ru.nl}{\protect\nolinkurl{r.deverclos@math.ru.nl}}, \protect\href{mailto:ross.kang@gmail.com}{\protect\nolinkurl{ross.kang@gmail.com}}. Supported by a Vidi grant (639.032.614) of the Netherlands Organisation for Scientific Research (NWO).}
	\and
	Wouter Cames van Batenburg
	\thanks{Laboratoire G-SCOP (CNRS, Univ. Grenoble Alpes), Grenoble, France. 
		Email: \protect\href{mailto:wouter.cames-van-batenburg@grenoble-inp.fr}{\protect\nolinkurl{wouter.cames-van-batenburg@grenoble-inp.fr}}. Partially supported by ANR Project GATO
(\textsc{ANR-16-CE40-0009-01})}
	\and
	R\'emi de Joannis de Verclos
	\footnotemark[2]
	\and
	Ross J. Kang
	\footnotemark[2]
}
\begin{document}
		\definecolor{xdxdff}{rgb}{0.49019607843137253,0.49019607843137253,1.}
	\definecolor{ududff}{rgb}{0.30196078431372547,0.30196078431372547,1.}
	
	\tikzstyle{every node}=[circle, draw, fill=black!50,
	inner sep=0pt, minimum width=4pt]

\maketitle

\begin{abstract}
We wish to bring attention to a natural but slightly hidden problem, posed by Erd\H{o}s and Ne\v{s}et\v{r}il in the late 1980s, an edge version of the degree--diameter problem. Our main result is that, for any graph of maximum degree $\Delta$ with more than $1.5 \Delta^t$ edges, its line graph must have diameter larger than $t$. In the case where the graph contains no cycle of length $2t+1$, we can improve the bound on the number of edges to one that is exact for $t\in\{1,2,3,4,6\}$. In the case $\Delta=3$ and $t=3$, we obtain an exact bound. Our results also have implications for the related problem of bounding the distance-$t$ chromatic index, $t>2$; in particular, for this we obtain an upper bound of $1.941\Delta^t$ for graphs of large enough maximum degree $\Delta$, markedly improving upon earlier bounds for this parameter.

\smallskip
{\bf Keywords}:  degree--diameter problem, strong cliques, distance edge-colouring
\end{abstract}


\section{Introduction}\label{sec:intro}

Erd\H{o}s in~\cite{Erd88} wrote about a problem he proposed with Ne\v{s}et\v{r}il:
\begin{quote}
``One could perhaps try to determine the smallest integer $h_t(\Delta)$ so that every $G$ of $h_t(\Delta)$ edges each vertex of which has degree $\le \Delta$ contains two edges so that the shortest path joining these edges has length $\ge t$ \dots {\em This problem seems to be interesting only if there is a nice expression for $h_t(\Delta)$}.''
\end{quote}
Equivalently, $h_t(\Delta)-1$ is the largest number of edges inducing a graph of maximum degree $\Delta$ whose line graph has diameter at most $t$.
Alternatively, one could consider this an edge version of the (old, well-studied, and exceptionally difficult) degree--diameter problem, cf.~\cite{B78}.

It is easy to see that $h_t(\Delta)$ is at most $2\Delta^t$ always, but one might imagine it to be smaller.
For instance, the $t=1$ case is easy and $h_1(\Delta)=\Delta+1$.
For $t=2$, it was independently proposed by Erd\H{o}s and Ne\v{s}et\v{r}il~\cite{Erd88} and Bermond, Bond, Paoli and Peyrat~\cite{BPPT83} that
$h_2(\Delta) \le 5\Delta^2/4+1$, there being equality for even $\Delta$. This was confirmed by Chung, Gy\'arf\'as, Tuza and Trotter~\cite{CGTT90}. 
For the case $t=3$, we suggest the following as a ``nice expression''.
\begin{conjecture}\label{conj:CJK}
$h_3(\Delta)\le \Delta^3-\Delta^2+\Delta+2$, with equality if $\Delta$ is one more than a prime power.
\end{conjecture}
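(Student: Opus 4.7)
The plan has two parts.

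For the matching construction when $\Delta=q+1$ with $q$ a prime power, let $H$ be the bipartite point--line incidence graph of $\mathrm{PG}(2,q)$: it is $(q+1)$-regular on $2(q^2+q+1)$ vertices with $(q+1)(q^2+q+1)=q^3+2q^2+2q+1$ edges, and any two same-side vertices are at $H$-distance $2$ via their unique common line or point. Form $H'$ by subdividing a single edge $e=(p_0,\ell_0)$ --- i.e., remove $e$, add a new vertex $w$, and add the edges $p_0w$ and $w\ell_0$ --- so that $H'$ has maximum degree $\Delta$ and $\Delta^3-\Delta^2+\Delta+1$ edges. A short case analysis shows that $L(H')$ still has diameter at most $3$: the only $H$-distance-$2$ paths that traverse $e$ are between $p_0$ and a point of $\ell_0\setminus\{p_0\}$, or between $\ell_0$ and a line through $p_0$ different from $\ell_0$; in each case at least one of the two edges under consideration is incident to $p_0$ or $\ell_0$, and the length-$2$ replacement path $p_0$--$w$--$\ell_0$ then recovers $d_{H'}(f_1,f_2)\le 2$. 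Hence $h_3(\Delta)\ge\Delta^3-\Delta^2+\Delta+2$.

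For the upper bound, let $G$ be extremal: $\Delta(G)=\Delta$ and every pair of edges at $G$-distance at most $2$. Fix $v$ of degree $\Delta$ and stratify via BFS-layers $L_i:=N_i(v)$; the diameter condition forces every edge to have an endpoint in $L_0\cup L_1\cup L_2\cup L_3$. I would split $E(G)$ into (i) the $\Delta$ edges at $v$, (ii) at most $\binom{\Delta}{2}$ edges inside $L_1$, (iii) edges between $L_1$ and $L_2$, and (iv) edges inside and leaving $L_2\cup L_3$. The diameter condition, applied to pairs $(vu,e')$ with $u\in L_1$ and $e'$ avoiding $\{v\}\cup L_1$, forces $G[L_1,L_2]$ to be essentially $C_4$-free, so a K\H{o}v\'ari--S\'os--Tur\'an/Reiman bound caps (iii) by $O(\Delta\sqrt{|L_2|})$; the same condition applied to pairs both avoiding $\{v\}\cup L_1$ bounds (iv). With $|L_1|\le\Delta$ and $|L_2|\le\Delta(\Delta-1)$, summing these contributions should yield $|E(G)|\le\Delta^3-\Delta^2+\Delta+1$.

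The decisive step, and the main obstacle, is stability. To match the coefficient $\Delta^3-\Delta^2+\Delta+1$ exactly, the Reiman bound on $G[L_1,L_2]$ must be essentially tight, which constrains the local structure to mimic an incidence geometry of order $\Delta-1$. When $\Delta-1$ is a prime power, a Moore-style uniqueness argument should then identify $G$ as the subdivided incidence graph of $\mathrm{PG}(2,\Delta-1)$; when $\Delta-1$ is not a prime power, the quantitative gap between the Zarankiewicz function and the Reiman bound must be extracted to produce the required strict inequality. I expect this non-prime-power case to be the main technical burden; beyond the $\Delta=3$ case already handled in the paper, a sensible warm-up is to verify the conjecture computationally for $\Delta\in\{4,5,6\}$ to pin down the exact stability lemma required.
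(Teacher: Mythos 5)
The statement you are addressing is posed in the paper as an open conjecture (Conjecture~\ref{conj:CJK}); the paper proves it only for $\Delta=3$ (Theorem~\ref{thm:cubic}, by a bespoke case analysis) and otherwise offers the weaker bounds $h_3(\Delta)\le\tfrac32\Delta^3+1$ (Theorem~\ref{thm:mainht}) and, for $C_7$-free graphs, the tree bound $\Delta^3-\Delta^2+\Delta$ on the number of edges (Theorem~\ref{thm:2t+1ht}). Your lower-bound construction is exactly the paper's: subdivide one edge of the point--line incidence graph of $\mathrm{PG}(2,q)$, obtaining a graph with $\Delta^3-\Delta^2+\Delta+1$ edges whose line graph still has diameter $3$, hence $h_3(\Delta)\ge\Delta^3-\Delta^2+\Delta+2$ when $\Delta-1$ is a prime power. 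That half is fine, and your check that the subdivision preserves line-graph diameter $3$ is the right verification to make.

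The upper bound is where the conjecture is genuinely open, and your plan has a gap precisely there. In your BFS decomposition almost all of the $\sim\Delta^3$ edges fall into item (iv), ``edges inside and leaving $L_2\cup L_3$'', which you dismiss in a single clause; bounding exactly those edges \emph{is} the problem. The K\H{o}v\'ari--S\'os--Tur\'an/Reiman step is also misplaced: $G[L_1,L_2]$ trivially has at most $\Delta(\Delta-1)$ edges, a lower-order term, and the diameter condition does not force it to be $C_4$-free. The real obstruction, which the paper isolates in Section~\ref{sec:general}, is edges with both endpoints in $N_3(v)$: these arise from $7$-cycles through $v$, are not excluded by the hypotheses, and accounting for them is exactly what degrades the bound from the tree value $\Delta^3-\Delta^2+\Delta$ (achieved only under $C_7$-freeness, via the weight-function argument of Proposition~\ref{sub}) to $\tfrac32\Delta^3$ (Proposition~\ref{substrong}). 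Your plan contains no mechanism for showing such edges contribute only $O(1)$ beyond the tree count, which is what the conjectured constant demands. Two smaller points: the conjecture asserts equality only \emph{if} $\Delta-1$ is a prime power, so no strict inequality (and hence no Zarankiewicz-gap extraction) is required for other $\Delta$; and no stability or uniqueness statement is needed for the equality claim, since it follows from the construction together with the upper bound.
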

\noindent
As to the hypothetical sharpness of this conjecture, first consider the point--line incidence graphs of finite projective planes of prime power order $q$. Writing $\Delta=q+1$, such graphs are bipartite, $\Delta$-regular, and of girth $6$; their line graphs have diameter $3$; and they have $\Delta^3-\Delta^2+\Delta$ edges. At the expense of bipartiteness and $\Delta$-regularity, one can improve on the number of edges in this construction by {\em one} by subdividing one edge, which yields the expression in Conjecture~\ref{conj:CJK}.
We remark that for multigraphs instead of simple graphs, one can further increase the number of edges by $\left \lfloor\frac \Delta2 \right \rfloor-1,$ by deleting some arbitrary vertex $v$ and replacing it with a multiedge of multiplicity $\left \lfloor\frac \Delta2 \right \rfloor,$ whose endvertices are connected with $\left \lfloor\frac \Delta2 \right \rfloor$ and $\left \lceil \frac \Delta2 \right \rceil$ of the original $\Delta$ neighbours of $v.$
This last remark contrasts to what we know for multigraphs in the case $t=2$, cf.~\cite{CaKa19,JKP19}.

Through a brief case analysis, we have confirmed Conjecture~\ref{conj:CJK} in the case $\Delta=3$.
\begin{thr}\label{thm:cubic}
The line graph of any (multi)graph of maximum degree $3$ with at least $23$ edges has diameter greater than $3$.
That is, $h_3(3) = 23$.
\end{thr}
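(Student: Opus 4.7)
I would argue by contradiction: suppose $G$ is a (multi)graph with $\Delta(G) \le 3$, $|E(G)| \ge 23$, and $L(G)$ of diameter at most $3$. Without loss of generality $G$ is connected. The line-diameter hypothesis is equivalent to the statement that for every pair of edges of $G$, some pair of their endpoints is at graph-distance at most $2$ in $G$. A matching construction with $22$ edges (the subdivision of the Heawood graph, plus the multi-edge modification for multigraphs) is given in the paragraph preceding the statement; only the upper bound remains.

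Fix an arbitrary edge $e = uv$ of $G$ and layer the vertex set by graph-distance from $\{u,v\}$: set $V_0 = \{u,v\}$ and $V_i = \{w : d_G(w,\{u,v\}) = i\}$. The hypothesis forces $V_i = \varnothing$ for $i \ge 4$ and $V_3$ to span no edge. Combined with $\Delta \le 3$, this yields $|V_1| \le 4$, $|V_2| \le 2|V_1| \le 8$, $|V_3| \le 2|V_2| \le 16$, and a Moore-type tally
\[
|E(G)| \;\le\; 1 + 4 + 8 + 16 \;=\; 29,
\]
which is too weak by $7$. The gap is closed by applying the line-diameter hypothesis also to edge-pairs not incident to $e$. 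The most useful case concerns two $V_2 V_3$-edges $xy, x'y'$ with distinct $V_3$-endpoints $y, y'$: the pair has line-distance at most $3$ only if $d_G(x, x') \le 2$, which (together with $V_3$ being independent and $\Delta \le 3$) forces the $V_3$-vertices to cluster around a very small subset of $V_2$. A symmetric argument constrains the $V_1 V_2$-interface.

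A brief case analysis then closes the proof, parametrised by whether $V_3 = \varnothing$, whether $|V_1| < 4$ (equivalently, $u$ or $v$ has degree less than $3$ or shares a neighbour with its partner), and whether there are edges inside $V_1$ (equivalently, short cycles through $e$). In each case, the combined constraints cap $|E(G)|$ at $22$. I expect the main obstacle to be the near-extremal case: one must argue that the only configuration on at most $15$ vertices saturating the bound is (a subgraph of) the subdivided Heawood graph, and then verify that no further edge can be inserted without violating $\Delta \le 3$, since the unique degree-$2$ vertex in the extremal configuration is flanked by two degree-$3$ vertices and no free valency remains elsewhere.
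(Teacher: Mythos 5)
Your overall strategy is the same as the paper's: root a breadth-first search at an arbitrary edge $e=uv$, note the Moore-type ceiling $|E|\le 1+4+8+16=29$, and then use the condition that every edge lies within line-distance $3$ of every other edge to whittle this down, splitting into cases according to degree deficiencies and short cycles through $e$. However, as written the proposal is only a plan: the sentence ``a brief case analysis then closes the proof'' stands in for essentially all of the content. The paper needs a full page of delicate argument just to exclude $5$-cycles (the naive count there still allows $26$ edges, and one must pin down $|V|=16$, analyse which second-neighbourhood vertices coincide, and derive a contradiction through several subcases), and another page for the residual girth-$\ge 6$, $3$-regular case, where one must locate at least seven ``missing'' edges below $29$. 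None of that is carried out or even localised in your sketch, and it is not at all evident that the three binary parameters you propose (whether $V_3=\varnothing$, whether $|V_1|<4$, whether $V_1$ spans an edge) organise the analysis into tractable pieces; the paper instead first proves $3$-regularity (so $3\mid |E|$, which is what lets a bound of $24$ or even $23$ suffice in later cases) and then excludes triangles, multi-edges, $C_4$ and $C_5$ one girth value at a time, each exclusion feeding into the next.

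There is also a concrete error in the one deduction you do attempt. For two edges $xy,x'y'$ with $x,x'\in V_2$ and distinct $y,y'\in V_3$, line-distance at most $3$ means that \emph{some} pair among $(x,x')$, $(x,y')$, $(y,x')$, $(y,y')$ is at graph-distance at most $2$; it does not force $d_G(x,x')\le 2$. For instance $y$ and $y'$ may share a neighbour in $V_2$, or $x$ may be adjacent to $y'$, without any constraint on $d_G(x,x')$. So the claimed clustering of $V_3$ around a small subset of $V_2$ does not follow as stated. The intuition is in the right direction (in the paper's endgame the two vertices of $N_3(e)$ do have prescribed, disjoint neighbourhoods in $N_2(e)$), but establishing it requires the full structural preparation --- $3$-regularity, girth at least $6$, $|V|=16$ --- that your sketch has not yet supplied.
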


For larger fixed $t$, although we are slightly less confident as to what a ``nice expression'' for $h_t(\Delta)$ might be, we believe that $h_t(\Delta)=(1+o(1))\Delta^t$ holds for infinitely many $\Delta$. 

We contend that this naturally divides into two distinct challenges, the former of which appears to be more difficult than the latter.
\begin{conjecture}\label{conj:lower}
For any $\eps>0$, $h_t(\Delta) \ge (1-\eps)\Delta^t$ for infinitely many $\Delta$.
\end{conjecture}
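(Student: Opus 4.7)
The plan is to exhibit, for each fixed $t$ and each $\eps>0$, an infinite family of graphs $G=G_\Delta$ with $\Delta(G)=\Delta\to\infty$, $L(G)$ of diameter at most $t$, and $|E(G)|\ge(1-\eps)\Delta^t$. The key reformulation is that, for distinct edges $e\neq f$, $d_{L(G)}(e,f)=1+\min_{u\in e,\,v\in f}d_G(u,v)$, so one needs every pair of edges in $G$ to have endpoints at distance at most $t-1$. Since $|E(G)|\le\frac{\Delta}{2}|V(G)|$, reaching $|E(G)|\sim\Delta^t$ forces $|V(G)|\gtrsim 2\Delta^{t-1}$, which matches the bipartite Moore bound for $\Delta$-regular bipartite graphs of girth $2t$; in this extremal regime the ball of radius $t-1$ around any edge essentially fills $V(G)$, so the constraint on pairs of edges is naturally satisfied.

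The most natural candidates are therefore the incidence graphs of thick generalized $t$-gons of order $(q,q)$, which exist for every prime power $q$ when $t\in\{3,4,6\}$. Such a graph is bipartite $(q+1)$-regular with $2(q^t-1)/(q-1)$ vertices and $(q+1)(q^t-1)/(q-1)\sim\Delta^t$ edges, where $\Delta=q+1$. Using the formula above together with the standard metric properties of generalized polygons, one verifies directly that $L(G)$ has diameter exactly $t$. This yields the conjecture for $t\in\{3,4,6\}$ along the infinite sequence $\Delta\in\{q+1:q\text{ a prime power}\}$, while $t=1,2$ are trivial (stars and complete bipartite graphs).

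The main obstacle is $t\notin\{1,2,3,4,6\}$, where Feit--Higman rules out any generalized $t$-gon of order $(s,s)$. For $t=8$ the Ree--Tits octagons exist but only at order $(q,q^2)$; these are biregular, and measured against $\Delta=q^2+1$ they have far fewer than $\Delta^8$ edges. For other $t$ there is no classical generalized polygon at all. I would attack these remaining cases in parallel via (i) incidence graphs of partial linear spaces, dual polar graphs, or truncations of higher-rank Tits buildings, whose parameters are less constrained but which typically lose a constant factor against Moore; (ii) explicit Ramanujan Cayley graphs from $\mathrm{PSL}_2(q)$, tuned so that the vertex count is $\sim 2\Delta^{t-1}$ and the diameter is at most $t-1$; and (iii) as a fallback, a random bipartite $\Delta$-regular graph of girth $\ge ct$ plus a concentration bound on the diameter, which would likely yield only $h_t(\Delta)\ge c\Delta^t$ for some $c<1$. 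Closing the gap to $(1-\eps)\Delta^t$ for all $t$ amounts to constructing near-Moore bipartite graphs of girth $2t$ for arbitrary $t$, a long-standing open problem in extremal graph theory that appears to be the principal obstacle to proving the conjecture in full.
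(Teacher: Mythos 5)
This statement is a conjecture, not a theorem: the paper offers no proof of it, and explicitly records that it ``remains open'' for all $t\notin\{1,2,3,4,6\}$. Your writeup is therefore not, and could not honestly be, a proof of the statement as quantified (for every $t$). What you do establish --- the cases $t\in\{1,2,3,4,6\}$ via stars, complete bipartite graphs, and the incidence graphs of projective planes, symplectic quadrangles and split Cayley hexagons of order $(q,q)$ --- coincides exactly with the evidence the authors themselves cite in the discussion following the conjecture, and your parameter counts and the reformulation $d_{L(G)}(e,f)=1+\min_{u\in e,\,v\in f}d_G(u,v)$ are correct. Your diagnosis of the obstruction (Feit--Higman killing thick generalized $t$-gons of order $(s,s)$ for other $t$, and the Ree--Tits octagons being too biregular to help at $t=8$) is also accurate.

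The genuine gap is everything after that: items (i)--(iii) of your plan are speculative and, by your own admission, would at best recover $h_t(\Delta)\ge c\Delta^t$ for some constant $c<1$, which is the content of Proposition~\ref{prop:CaGo05} in the paper (there with $c=0.629^t$, derived from the Canale--G\'omez constructions for Bollob\'as's degree--diameter conjecture) rather than the $(1-\eps)$ demanded by the conjecture. In particular, a random $\Delta$-regular bipartite graph on $\sim 2\Delta^{t-1}$ vertices does not have diameter $t-1$ with the required uniformity of edge-to-edge distances without losing a constant factor in the vertex count, and Ramanujan-graph diameter bounds carry a multiplicative constant in the exponent that is fatal here. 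So you have correctly reproduced the known partial evidence and correctly located the open problem, but you have not closed it; no one has.
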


\begin{conjecture}\label{conj:upper}
For $t\ne 2$ and any $\eps>0$, $h_t(\Delta) \le (1+\eps)\Delta^t$ for all large enough $\Delta$.
\end{conjecture}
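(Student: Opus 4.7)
The plan is to bound the ball of radius $t$ around a carefully chosen edge in the line graph. Let $G$ have maximum degree $\Delta$, suppose $L(G)$ has diameter at most $t$, and aim to show $|E(G)| \le (1+\eps)\Delta^t$ for $\Delta$ large. Since under the diameter hypothesis the ball $B_t(e)$ around $e$ in $L(G)$ equals all of $E(G)$ for every edge $e$, it suffices to exhibit some edge $e$ with $|B_t(e)| \le (1+\eps)\Delta^t$.

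First I would unpack the ball: for $e = uv$, an edge $f$ lies in $B_t(e)$ iff $f$ has an endpoint at $G$-distance at most $t-1$ from $\{u,v\}$, so $B_t(e)$ is exactly the set of edges of $G$ incident to $S(e) := \bigcup_{i \le t-1} N_i(\{u,v\})$, where $N_i$ denotes the $i$-th BFS layer. The standard estimates $|S(e)| \le 2(\Delta-1)^{t-1}+O(\Delta^{t-2})$ and $|B_t(e)| \le \Delta \cdot |S(e)|$ give the trivial $|B_t(e)| \le 2\Delta^t$. Moreover, the diameter hypothesis forces $G[N_{\ge t}(\{u,v\})]$ to be an independent set (otherwise an edge within it would be at $L(G)$-distance at least $t+1$ from $e$), whence $V(G) = N_{\le t}(\{u,v\})$ and every edge incident to $N_t$ goes to $N_{t-1}$.

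The structural heart of the argument would be a dichotomy. If the BFS from $\{u,v\}$ is locally tree-like up to depth $t-1$ with near-maximal layer sizes, then pairs $w_1, w_2 \in N_{t-1}$ can be at $G$-distance as large as $2(t-1)$; but for any two edges $f_i = w_i w_i'$ with $w_i \in N_{t-1}$, $w_i' \in N_t$, the requirement $d_{L(G)}(f_1, f_2) \le t$ forces (when $w_1' \ne w_2'$ and $w_1, w_2$ are non-adjacent) either $d_G(w_1, w_2) \le t-1$ or the two children $w_1', w_2'$ to be joined by a short path through shared neighbours in $N_{t-1}$. This in turn forces either many short cycles through $e$ (breaking tree-likeness and shrinking $|S(e)|$) or a heavily compressed $N_t$ with many shared children. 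A double count of pairs in $N_{t-1}^2$ forced to share a child in $N_t$ should give an initial improvement $|B_t(e)| \le (2-\delta)\Delta^t$ for some $\delta = \delta(\eps, t) > 0$.

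The hard part will be closing the gap from $2-\delta$ to the conjectured $1 + \eps$. I expect this to require a global Moore-type stability: any $G$ with $|E(G)|$ significantly exceeding $\Delta^t$ should be ``almost Moore-like'' at every edge simultaneously, and such rigidity should allow a cage- or Moore-graph-style classification to pin $|E(G)|$ at $(1+o(1))\Delta^t$. This is the principal obstacle I foresee, analogous to classical Moore graph classifications that have resisted general solution. The exclusion of $t = 2$ is essential at precisely this stage: for $t = 2$ there is only one outer BFS layer and the ``shared children'' argument does not bite, consistent with the $\tfrac{5}{4}\Delta^2$ construction via blow-ups of $C_5$; for $t \ge 3$ the extra BFS layers provide enough structural rigidity for Moore-like forcing to pin the constant at $1 + o(1)$.
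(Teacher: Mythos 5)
You are attempting to prove a statement that the paper itself leaves open: Conjecture~\ref{conj:upper} is a conjecture, and the paper's actual results in its direction are only Theorem~\ref{thm:mainht} (the bound $\frac32\Delta^t+1$, via the weight-function argument of Proposition~\ref{substrong}) and Theorem~\ref{thm:2t+1ht} (the bound $\Delta^t$ under the additional hypothesis of $C_{2t+1}$-freeness). Your proposal does not close this gap either, and you say so yourself: the entire passage from ``$|B_t(e)| \le (2-\delta)\Delta^t$'' to ``$(1+\eps)\Delta^t$'' is deferred to an unproved ``global Moore-type stability'' and a ``cage- or Moore-graph-style classification.'' That deferred step is not a technical loose end; it is the whole problem. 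The Moore-type rigidity you invoke is only known to operate under girth or excluded-cycle hypotheses (this is exactly how the paper uses the classification of generalised polygons~\cite{Sing66,BI73} in Section~\ref{sec:2t+1Free}, after first establishing $C_{2t+1}$-freeness by assumption), and already the subdivision construction below Conjecture~\ref{conj:CJK} shows that extremal graphs for the unrestricted problem need not be regular or vertex-transitive, so no off-the-shelf classification applies. Your intermediate claim of $(2-\delta)\Delta^t$ via double-counting shared children is plausible but unquantified, and note that even if carried out it would likely be weaker than the $\frac32\Delta^t$ the paper actually proves.

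The parts of your sketch that are correct are the standard reductions: for $e=uv$, an edge lies within $L(G)$-distance $t$ of $e$ iff it meets $N_{\le t-1}(\{u,v\})$, every edge incident to $N_t(\{u,v\})$ goes back to $N_{t-1}(\{u,v\})$, and the trivial count gives $2\Delta^t$. This is also where the paper starts, though it then roots the BFS at a single maximum-degree vertex $v$ of the clique subgraph $H$, partitions each layer into $A_m$ (vertices reached efficiently from the neighbours $u_1,\dots,u_j$) and $R_m$, and controls the problematic edges inside the last layer $A'_t$ by the convexity inequality of Claim~\ref{claim:technical} rather than by a pair-counting argument. If you want a provable statement along the lines you propose, you should either aim at reproving the $\frac32\Delta^t$ bound or impose the $C_{2t+1}$-free hypothesis, under which your Moore-graph endgame genuinely becomes available.
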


\noindent
With respect to Conjecture~\ref{conj:lower}, we mentioned earlier how it is known to hold for $t\in\{1,2,3\}$. For $t\in\{4,6\}$, it holds due to the point--line incidence graphs of, respectively, a symplectic quadrangle with parameters $(\Delta-1,\Delta-1)$ and a split Cayley hexagon with parameters $(\Delta-1,\Delta-1)$ when $\Delta-1=q$ is a prime power. For all other values of $t$ the conjecture remains open. 
Conjecture~\ref{conj:lower} may be viewed as the direct edge analogue of an old conjecture of Bollob\'as~\cite{B78}. That conjecture asserts, for any positive integer $t$ and any $\eps>0$, that there is a graph of maximum degree $\Delta$ with at least $(1-\eps)\Delta^t$ vertices of diameter at most $t$ for infinitely many $\Delta$.
The current status of Conjecture~\ref{conj:lower} is essentially the same as for Bollob\'as's conjecture: it is unknown if there is an absolute constant $c>0$ such that $h_t(\Delta) \ge c \Delta^t$ for all $t$ and infinitely many $\Delta$.
For large $t$ the best constructions we are aware of are (ones derived from) the best constructions for Bollob\'as's conjecture.
\begin{prop}\label{prop:CaGo05}
There is $t_0$ such that $h_t(\Delta) \ge 0.629^t \Delta^t$ for $t\ge t_0$ and infinitely many $\Delta$.
\end{prop}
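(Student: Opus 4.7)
The plan is to import a known construction for Bollob\'as's degree--diameter problem and pass to its line graph. Specifically, suppose we have, for infinitely many $\Delta$, a $\Delta$-regular graph $G$ of diameter at most $t-1$ with $n \ge f(t-1)\,\Delta^{t-1}$ vertices, where $f(k)^{1/k}$ converges (or is bounded below) by some constant $c>0.629$ as $k\to\infty$. The key observation is that the line graph can only inflate the diameter by one: for any two edges $e_1,e_2$ of $G$, choose endpoints $u\in e_1$, $v\in e_2$ minimising $d_G(u,v)$; concatenating $e_1$, a shortest $u$--$v$ path, and $e_2$ gives a walk on edges whose consecutive pairs share a vertex, of length at most $d_G(u,v)+1\le t$ in $L(G)$. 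Hence $\operatorname{diam}(L(G))\le t$.

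Since $G$ is $\Delta$-regular, $|E(G)| = n\Delta/2 \ge \tfrac12 f(t-1)\,\Delta^t$, which suffices provided $\tfrac12 f(t-1) \ge 0.629^t$. I would invoke the Canale--G\'omez-type constructions (as referenced) which achieve $f(k)^{1/k}\to c$ for some $c$ marginally larger than $0.629$ (indeed, close to $2^{-2/3}\approx 0.62996$) along an infinite subsequence of prime-power values of $\Delta$. The inequality
\[
\tfrac12 f(t-1) \ge 0.629^t \iff (c/0.629)^{t-1}\ge 2\cdot 0.629\cdot (1+o(1))
\]
then holds for all $t$ at least some threshold $t_0$, because the left-hand side grows exponentially.

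The main obstacle is simply securing a sufficiently strong underlying degree--diameter construction: one needs the asymptotic growth rate of vertices per factor of $\Delta$ to exceed $0.629$, and one needs the construction to be (essentially) $\Delta$-regular so that no additional factor is lost in the handshake step. Both features are provided by the best known Moore-like constructions along prime-power subsequences. A minor technicality is to handle the case where the construction is $\Delta$-regular only up to lower-order perturbations (e.g.\ vertex-removal or edge-surgery): this costs at most a multiplicative $1+o(1)$ factor and is absorbed by the slack between $0.629$ and the true asymptotic rate.
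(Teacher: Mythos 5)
Your plan is essentially the paper's proof: take the Canale--G\'omez degree--diameter construction with diameter $t-1$, observe that passing to the line graph increases the diameter by at most one, count roughly $n\Delta/2$ edges, and absorb the factor $1/2$ into the gap between the construction's rate ($0.6291$) and the target $0.629$. The one soft spot is your treatment of regularity. The Canale--G\'omez graphs are only guaranteed to have \emph{maximum} degree $\Delta$, and for a graph of maximum degree $\Delta$ the bound $|E|\ge n\Delta/2\cdot(1+o(1))$ is not automatic --- the average degree could in principle be far below $\Delta$ --- so your claim that non-regularity ``costs at most a multiplicative $1+o(1)$ factor'' is not justified as stated. The fix is the one you gesture at with ``edge-surgery,'' and the paper spells it out: greedily add edges between non-adjacent vertices of degree less than $\Delta$; as long as at least $\Delta+1$ such vertices remain, some admissible pair exists, so the process terminates with at most $\Delta$ deficient vertices, giving at least $\tfrac12\bigl((0.6291\Delta)^{t-1}\Delta-\Delta^2\bigr)$ edges, and crucially adding edges cannot increase the diameter. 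With that step made explicit, your argument goes through exactly as in the paper.
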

\begin{proof}
Canale and G\'omez~\cite{CaGo05} proved the existence of graphs of maximum degree $\Delta$, of diameter $t'$, and with more than $(0.6291\Delta)^{t'}$ vertices, for $t'$ large enough and infinitely many $\Delta$. Consider this construction for $t'=t-1$ and each valid $\Delta$.
Now in an iterative process arbitrarily add edges between vertices of degree less than $\Delta$.
Note that as long as there are at least $\Delta+1$ such vertices, then for every one there is at least one other to which it is not adjacent.
Thus by the end of this process, at most $\Delta$ vertices have degree smaller than $\Delta$, and so the resulting graph has at least $\frac 12 ( (0.6291\Delta)^{t'}\Delta  - \Delta^2 )$ edges, which is greater than $(0.629\Delta)^{t}$ for $t$ sufficiently large.
Furthermore since the graph has diameter at most $t-1$, its line graph has diameter at most $t$.
\end{proof}

\noindent
By the above argument (which was noted in~\cite{DeSl19eurocomb}), the truth of Bollob\'as's conjecture would imply a slightly weaker form of Conjecture~\ref{conj:lower}, that is, with a leading asymptotic factor of $1/2$.
As far as we are aware, a reverse implication, i.e.~from Conjecture~\ref{conj:lower} to some form of Bollob\'as's conjecture is not known.

Our main result is partial progress towards Conjecture~\ref{conj:upper} (and thus Conjecture~\ref{conj:CJK}).

\begin{thr}\label{thm:mainht}
$h_t(\Delta) \le \frac{3}{2}\Delta^t+1$.
\end{thr}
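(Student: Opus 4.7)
The plan is to fix an arbitrary edge $e_0 = u_0 v_0 \in E(G)$, perform a breadth-first search in $G$ from the pair $\{u_0, v_0\}$, and count edges layer by layer. Writing $V_j = \{w : d_G(w, \{u_0, v_0\}) = j\}$, $a_j = |E(G[V_j])|$ and $b_j = |E(V_j, V_{j+1})|$, one has $|E(G)| = \sum_{j=0}^{t-1}(a_j + b_j)$, with $a_0 = 1$.

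First I would translate the line-graph diameter hypothesis into BFS data. Any edge of $G$ with both endpoints in $V_{\ge t}$ would sit at line-graph distance strictly more than $t$ from $e_0$; hence $V_{\ge t+1} = \emptyset$ and $V_t$ is independent. In particular, $S := V_0 \cup \cdots \cup V_{t-1}$ is a vertex cover of $G$, and since each $V_t$-vertex has all its neighbours in $V_{t-1}$, splitting the degree sum gives
$$2|E(G)| \le \Delta|S| + b_{t-1}.$$
Combining this with the standard BFS inequalities $|V_1| \le 2(\Delta-1)$, $|V_j| \le (\Delta-1)|V_{j-1}|$ for $j \ge 2$, and $b_{t-1} \le (\Delta-1)|V_{t-1}|$ telescopes to a bound of order $2\Delta^t$.

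The main obstacle is tightening this naive bound by the missing factor $4/3$. My expectation is that one must exploit the fact that the line-graph diameter condition applies to \emph{every} pair of edges, not only to pairs containing the chosen $e_0$. Indeed, if the BFS layers were all saturated at $|V_j| = 2(\Delta-1)^j$, the search would split cleanly into a ``$u_0$-side'' and a ``$v_0$-side'', and two edges of the outermost layer on opposite sides would be at line-graph distance roughly $2t$ via $e_0$, violating the diameter assumption. To avoid this, $G$ must contain shortcut edges internal to some $V_j$ (or bridging the two sides). Each such shortcut forces $a_j > 0$ and, via $|V_{j+1}| \le (\Delta-1)|V_j| - 2 a_j$, propagates a reduction to the outer layers. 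Quantifying how many shortcuts are required and tracking their cumulative effect through the BFS recursion is the delicate step; once it is done, it should deliver the necessary saving and yield $|E(G)| \le \tfrac{3}{2}\Delta^t$, whence $h_t(\Delta) \le \tfrac{3}{2}\Delta^t + 1$.
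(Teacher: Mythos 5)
There is a genuine gap: everything up to the bound of order $2\Delta^t$ is the routine part, and the entire quantitative content of the theorem --- the saving from $2\Delta^t$ down to $\tfrac32\Delta^t$ --- is deferred to a ``delicate step'' that you do not carry out. Your proposed mechanism (two saturated outermost half-layers would be at line-graph distance about $2t$ via $e_0$, so shortcut edges inside some $V_j$ must exist, and each such edge removes two vertices from the next layer) is only a heuristic. To gain the required factor you would need on the order of $\tfrac14\Delta^{j+1}$ internal edges at level $j$, and nothing in your sketch shows the all-pairs distance condition forces anywhere near that many, nor how their effects accumulate without double counting. As written, the argument proves only $h_t(\Delta)\le 2\Delta^t+O(\Delta^{t-1})$.

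The paper's proof is organised quite differently, and the difference is where the constant $\tfrac32$ actually comes from. In Proposition~\ref{substrong} the breadth-first search is rooted at a \emph{vertex} $v$ of maximum degree $j$ in the subgraph $H$ whose edges form the clique in $L(G)^t$, and the hypothesis used is that every edge of $H$ is within distance $t$ of \emph{each} of the $j$ edges $vu_1,\dots,vu_j$ simultaneously. A weight $w(x)$ is assigned to each vertex $x$ of the $m$-th layer, counting paths of length $m-1$ back to $\{u_1,\dots,u_j\}$; the layer sums of $w$ are at most $j(\Delta-1)^{m-1}$. In this vertex-rooted framing the outermost layer $N_t$ is \emph{not} independent (in contrast with your $V_t$), and the key point is that an edge $a_1a_2$ inside $N_t$ must be within distance $t-1$ of all of $vu_1,\dots,vu_j$, which forces $w(a_1)+w(a_2)\ge j$. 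The elementary inequality of Claim~\ref{claim:technical}, namely $\frac{\frac32 x-m}{j-m}+\frac{\frac32 y-n}{j-n}\ge 1$ for $x+y\ge j$, then converts this constraint into the statement that the edges meeting $N_t$ number at most $\tfrac32\sum_{a\in N_t\cap A}w(a)\le \tfrac32 j(\Delta-1)^{t-1}$; the factor $\tfrac32$ in the theorem is exactly the constant in that optimization (and it is attained, e.g.\ by the construction in Figure~\ref{fig:beatC5}). None of this structure --- the choice of root, the weight function, or the extremal inequality --- appears in your proposal, so the proof is not complete.
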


\noindent
Theorem~\ref{thm:mainht} is a result/proof valid for all $t\ge1$, but as we already mentioned there are better, sharp determinations for $t\in\{1,2\}$.
 We have also settled Conjecture~\ref{conj:upper} in the special case of graphs containing no cycle $C_{2t+1}$ of length $2t+1$ as a subgraph.

\begin{thr}\label{thm:2t+1ht}
The line graph of any $C_{2t+1}$-free graph of maximum degree $\Delta$ with at least $\Delta^t$ edges has diameter greater than $t$.
\end{thr}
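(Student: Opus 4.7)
The plan is to prove the contrapositive: supposing $G$ is $C_{2t+1}$-free with maximum degree $\Delta$ and $L(G)$ has diameter at most $t$, I will show that $|E(G)|\le\Delta^t$. I begin by fixing an arbitrary edge $e=uv$ of $G$. Using that $d_{L(G)}(e,f)=1+\min_{x\in e,\,y\in f}d_G(x,y)$ for distinct edges $e,f$, the diameter hypothesis is equivalent to: every edge of $G$ has at least one endpoint within graph-distance $t-1$ of $\{u,v\}$. Partition $V(G)$ into BFS layers $V_k=\{w\in V(G):d_G(w,\{u,v\})=k\}$; every non-isolated vertex then lies in $V_{\le t}$, and edges respect the BFS structure, meaning $E(V_i,V_j)=\emptyset$ whenever $|i-j|\ge 2$.

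The first step is a naive counting bound obtained by assigning each edge $f\ne e$ to its endpoint of smaller BFS index (ties broken arbitrarily). Any vertex $w\in V_k$ with $k\ge 1$ has at least one neighbour in $V_{k-1}$, so at most $\Delta-1$ edges get assigned to $w$; similarly each of $u,v\in V_0$ receives at most $\Delta-1$ edges once we set aside $e$ itself. Combined with the standard BFS bound $|V_k|\le 2(\Delta-1)^k$, this yields an estimate of the form $|E(G)|\le 2\Delta^t$ up to lower-order terms, which is off from the claimed bound by a factor of two.

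To recover this factor of two, I would invoke the $C_{2t+1}$-free condition as follows. The key structural observation is that any vertex $w$ with $d_G(u,w)=d_G(v,w)=t$ must have its shortest paths from $u$ to $w$ and from $v$ to $w$ sharing an internal vertex, for otherwise their union together with the edge $uv$ forms a cycle of length exactly $2t+1$. This propagates inward to equidistant vertices in lower layers, showing that the BFS tree rooted at $\{u,v\}$ cannot simultaneously realize maximal expansion from both roots. Intuitively, $V_{\le t-1}$ decomposes into an $A$-side (vertices at least as close to $u$ as to $v$) and a $B$-side (strictly closer to $v$), and for counting purposes the relevant edges essentially lie within only one of the two sides.

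The main obstacle is converting this qualitative observation into the required quantitative improvement. Concretely, I would refine the earlier assignment so that each edge is charged to a specific rooted BFS structure (rooted at $u$ or at $v$), and then bound each side separately by roughly $\tfrac12\Delta^t$, using that a large equidistant population in layer $V_{t-1}$ or $V_t$ would produce a forbidden $C_{2t+1}$. Assembling the two side bounds yields $|E(G)|\le\Delta^t$, which is exactly the contrapositive of the theorem.
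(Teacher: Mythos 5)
Your proposal correctly reduces the theorem to a counting problem and correctly identifies where the hypothesis of $C_{2t+1}$-freeness must enter: two internally disjoint shortest paths of length $t$ from $u$ and from $v$ to a common vertex, together with the edge $uv$, would form a $C_{2t+1}$. That observation is sound and is indeed the same use of the hypothesis as in the paper (there it appears in Claim~\ref{cl:no_edge_inNi}, to forbid edges of $H$ with both endpoints in $N_t$). However, the argument stops exactly where the work begins. The step you flag as ``the main obstacle'' --- converting the qualitative path-intersection fact into the loss of a full factor of $2$ --- is the entire content of the proof, and the plan you sketch for it does not survive contact with the extremal examples. For $t\in\{2,3,4,6\}$ the tight graphs are point--line incidence graphs of generalised polygons: they are $\Delta$-regular, vertex-transitive on each side, and rooted at any edge $uv$ both BFS trees expand maximally; there is no meaningful split into an ``$A$-side'' carrying essentially all the edges and a negligible ``$B$-side,'' and no collection of roughly $\tfrac12\Delta^t$ edges per side. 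So the proposed charging scheme, as described, has no reason to yield $\Delta^t$ rather than $2\Delta^t$.

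The paper's route is structurally different and worth contrasting. It does not root at an edge; it roots at a single vertex $v$ of maximum degree $j$ in the clique-inducing subgraph $H$ and exploits the much stronger condition that every edge of $H$ is within distance $t$ in $L(G)$ of \emph{every} pencil edge $vu_1,\dots,vu_j$ simultaneously, not merely of one edge $uv$. This multi-constraint is what removes the factor of $2$: a vertex at distance $m$ from $v$ that is useful must be at distance $m-1$ from some $u_i$ (the set $A_m$), and a weight function $w$ counting such paths satisfies $\sum_{x\in A_m}w(x)\le j(\Delta-1)^{m-1}$, which sums over layers to $\tfrac{j}{\Delta}\lvert E(T_{t,\Delta})\rvert$; the leftover vertices $R$ contribute the complementary $(\Delta-j)$ share. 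The $C_{2t+1}$-freeness is used only once, to rule out edges of $H$ inside $N_t(v)$, after an extremality/subdivision trick has already ruled out edges inside $N_i(v)$ for $i\le t-1$. None of these ingredients (the maximising choice of $H$, the pencil of constraints at $v$, the sets $A_m$ and $R_m$, the weight function) appears in your outline, and without them, or a genuine substitute, the bound $\Delta^t$ is not established.
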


\noindent
For $t\in\{1,2,3,4,6\}$, this last statement is asymptotically sharp (and in its more precise formulation the result is in fact exactly sharp) due to the point--line incidence graphs of generalised polygons.
The cases $t\in\{3,4,6\}$ are perhaps most enticing in Conjecture~\ref{conj:upper}, and that is why we highlighted the case $t=3$ first in Conjecture~\ref{conj:CJK}.

In order to discuss one consequence of our work, we can reframe the problem of estimating $h_t(\Delta)$ in stronger terms. Let us write $L(G)$ for the line graph of $G$ and $H^t$ for the $t$-th power of $H$ (where we join pairs of distinct vertices at distance at most $t$ in $H$). Then the problem of Erd\H{o}s and Ne\v{s}et\v{r}il framed at the beginning of the paper is equivalent to seeking optimal bounds on $|L(G)|$ subject to $G$ having maximum degree $\Delta$ and $L(G)^t$ inducing a clique. 
Letting $\omega(H)$ denote the clique number of $H$, our main results are proven in terms of bounds on the {\em distance-$t$ edge-clique number} $\omega(L(G)^t)$ for graphs $G$ of maximum degree $\Delta$. In particular, we prove  Theorem~\ref{thm:mainht} by showing the following stronger form.

\begin{thr}\label{thm:mainclique}
For any graph $G$ of maximum degree $\Delta$, it holds that $\omega(L(G)^t) \le \frac32\Delta^t$.
\end{thr}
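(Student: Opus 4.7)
Fix a distance-$t$ edge-clique $F \subseteq E(G)$ and an arbitrary edge $e_0 = u_0 v_0 \in F$. From the identity $d_{L(G)}(e, f) = 1 + \min\{d_G(x, y) : x \in e, y \in f\}$ for distinct edges, each $f \in F$ has an endpoint at $G$-distance at most $t-1$ from $\{u_0, v_0\}$; hence every edge of $F$ is incident to $M := \{v \in V(G) : d_G(v, \{u_0, v_0\}) \le t-1\}$, giving the routine estimate $|F| \le \sum_{v \in M} \deg_G(v) \le |M|\Delta \le 2\Delta^t$ via BFS from $\{u_0, v_0\}$. This meets but does not beat the trivial bound.

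My plan for the improvement is to partition $F \setminus \{e_0\}$ by choosing, for each $f$, a shortest $L(G)$-path from $e_0$ to $f$ and assigning $f$ to $F_u$ or $F_v$ according to whether the second edge of the path is incident to $u_0$ or to $v_0$. Each $f \in F_u$ is then the terminal edge of a non-backtracking walk in $G$ starting with the directed edge $v_0 u_0$ and of total length at most $t+1$, and the number of such walks is bounded by $(\Delta-1) + (\Delta-1)^2 + \cdots + (\Delta-1)^t \approx \Delta^t$; symmetrically for $F_v$. This already recovers the bound $|F_u| + |F_v| + 1 \le 2\Delta^t$ from the ball estimate, by a different route.

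The additional saving to $\tfrac{3}{2}\Delta^t$ comes from applying the clique condition \emph{across} the partition. If $f \in F_u$ lies at $L(G)$-distance $t$ from $e_0$ and $f' \in F_v$ similarly lies at distance $t$, then any $L(G)$-walk between them that passes through $e_0$ has length $2t$, violating $d_{L(G)}(f, f') \le t$. Thus $f$ and $f'$ must be joined by a ``shortcut'' walk avoiding the edge $e_0$; equivalently, $G$ must contain a cycle through $e_0$ of length at most $2t$. Accounting for these shortcut walks --- either by double-counting them against the associated cross-pairs in $F_u \times F_v$, or by averaging $|F_u| + |F_v|$ over the choice of $e_0 \in F$ --- is expected to reduce the combined bound from $2\Delta^t$ to $\tfrac{3}{2}\Delta^t$.

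\textbf{Main obstacle.} The delicate step is carrying out the cross-side shortcut accounting tightly enough to obtain exactly the constant $\tfrac{3}{2}$, rather than a weaker $2-o(1)$. The analysis must handle edges not only in the outermost shell at $L(G)$-distance $t$ from $e_0$ but also at all intermediate shells, and must avoid double-counting shortcut walks that witness more than one cross-pair. I would guess that the cleanest route is via averaging over the choice of $e_0 \in F$, selecting $e_0$ so that $F_u$ and $F_v$ are structurally balanced, thereby capping the worst-case contribution of each side; an inductive argument on $t$ seems less attractive because the branching factor $2(\Delta-1)$ of edges incident to $e_0$ would overwhelm the inductive hypothesis in a naive recursion.
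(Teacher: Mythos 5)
There is a genuine gap, and it sits exactly where the content of the theorem lies. Your first two paragraphs only recover the trivial bound $2\Delta^t$ (twice, by two routes), and the third paragraph — the passage from $2\Delta^t$ to $\tfrac32\Delta^t$ — is a plan rather than an argument: ``is expected to reduce the combined bound'' and the admitted ``Main obstacle'' are precisely the step that must be proved. Moreover, the specific mechanism you propose is shaky even as a plan. For $f\in F_u$ and $f'\in F_v$ both at $L(G)$-distance exactly $t$ from $e_0$, the clique condition does force a connecting path avoiding $e_0$, but your claim that this yields a cycle through $e_0$ of length at most $2t$ does not follow: concatenating the three shortest paths ($e_0$ to $f$, $f$ to $f'$, $f'$ to $e_0$) only gives a closed walk of length up to about $3t$ through $e_0$. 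More importantly, nothing in the sketch explains why the shortcut accounting should cap the constant at $\tfrac32$ rather than at $2-o(1)$; no inequality is identified that produces that number.

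For comparison, the paper's proof (Proposition~\ref{substrong}) is organised quite differently and the $\tfrac32$ has a concrete source. One roots the analysis at a \emph{vertex} $v$ of maximum degree $j$ in the clique subgraph $H$, so that every edge of $H$ is simultaneously within $L(G)$-distance $t$ of all $j$ edges $vu_1,\dots,vu_j$ — a much stronger constraint than proximity to a single root edge. One then introduces a weight $w(x)$ counting paths of length $m-1$ from $x\in N_m(v)$ back to $\{u_1,\dots,u_j\}$, shows $\sum_{x\in N_m}w(x)\le j(\Delta-1)^{m-1}$, and observes that the only place where edges can be ``doubled up'' is inside the outermost shell $N_t(v)$: an edge $a_1a_2$ there forces $w(a_1)+w(a_2)\ge j$, and the elementary inequality $3(x+y)j-4xy\ge 2j^2$ (for $j\le x+y<2j$) is what caps the contribution of that shell at $\tfrac32\sum_{a\in N_t}w(a)$, whence the factor $\tfrac32$ overall. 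If you want to salvage your edge-rooted approach, you would need an analogue of this last convexity step; as written, your argument proves only the bound you already called trivial.
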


\noindent
We should remark that D\c{e}bski and \'Sleszy\'nska-Nowak~\cite{DeSl19eurocomb} announced a bound of roughly $\frac74\Delta^t$.
Note that the bound in Theorem~\ref{thm:mainclique} can be improved in the cases $t\in\{1,2\}$: $\omega(L(G)) \le \Delta+1$ is trivially true, while $\omega(L(G)^2) \le \frac43\Delta^2$ is a recent result of Faron and Postle~\cite{FaPo19}.
We also have a bound on $\omega(L(G)^t)$ analogous to Theorem~\ref{thm:2t+1ht}, a result stated and shown in Section~\ref{sec:2t+1Free}.

A special motivation for us is a further strengthened form of the problem. In particular, there has been considerable interest in $\chi(L(G)^t)$ (where $\chi(H)$ denotes the chromatic number of $H$), especially for $G$ of bounded maximum degree. For $t=1$, this is the usual chromatic index of $G$; for $t=2$, it is known as the {\em strong chromatic index} of $G$, and is associated with a more famous problem of Erd\H{o}s and Ne\v{s}et\v{r}il~\cite{Erd88}; for $t>2$, the parameter is referred to as the {\em distance-$t$ chromatic index}, with the study of bounded degree graphs initiated in~\cite{KaMa12}.
We note that the output of Theorem~\ref{thm:mainclique} may be directly used as input to a recent result~\cite{HJK21} related to Reed's conjecture~\cite{Ree98} to bound $\chi(L(G)^t)$. This yields the following.

\begin{cor}\label{cor:fromreed}
There is some $\Delta_0$ such that, for any graph $G$ of maximum degree $\Delta\ge \Delta_0$, it holds that $\chi(L(G)^t) < 1.941\Delta^t$.
\end{cor}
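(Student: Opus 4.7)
The plan is to apply the Reed-type chromatic bound from~\cite{HJK21} to the graph $H := L(G)^t$, using Theorem~\ref{thm:mainclique} as the clique input.

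First, Theorem~\ref{thm:mainclique} immediately gives $\omega(H) \le \tfrac{3}{2}\Delta^t$. Next, I would bound $\Delta(H)$ by a direct ball argument: two distinct edges $e,f$ of $G$ are adjacent in $H$ if and only if some endpoint of $f$ lies at $G$-distance at most $t-1$ from an endpoint of $e$. Bounding each of the balls $B_G(u,t-1)$ and $B_G(v,t-1)$ around the endpoints of $e=uv$ by $(1+o(1))(\Delta-1)^{t-1}$ and multiplying by the maximum degree $\Delta$ to count incident edges yields $\Delta(H) \le 2\Delta(\Delta-1)^{t-1} + O(\Delta^{t-1}) = (2+o(1))\Delta^t$.

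The result of~\cite{HJK21} furnishes, for some explicit $\alpha > 0$ and every graph $H$ of sufficiently large maximum degree, a Reed-type interpolation of the shape
\[
\chi(H) \le (1-\alpha)\Delta(H) + \alpha\,\omega(H) + o(\Delta(H)).
\]
Since $\Delta(H)$ tends to infinity with $\Delta$, taking $\Delta_0$ large enough makes this applicable. Substituting our two bounds yields
\[
\chi(L(G)^t) \le (1-\alpha)\cdot 2\Delta^t + \alpha\cdot \tfrac{3}{2}\Delta^t + o(\Delta^t) = \Bigl(2 - \tfrac{\alpha}{2} + o(1)\Bigr)\Delta^t,
\]
and the target $1.941\Delta^t$ follows provided $\alpha > 2(2-1.941) = 0.118$, which the explicit constant from~\cite{HJK21} supplies (with $\Delta_0$ enlarged to absorb the lower-order term into the gap).

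Both the clique input (Theorem~\ref{thm:mainclique}) and the degree bound on $L(G)^t$ are straightforward, so the main step is the clean invocation of~\cite{HJK21}. The only real concern is the numerical one, namely verifying that the explicit $\alpha$ of~\cite{HJK21} clears the threshold $0.118$ dictated by the clique-to-degree ratio $\omega(H)/\Delta(H) \le 3/4$ that our inputs deliver; this is precisely where the particular coefficient ``$1.941$'' originates and cannot be pushed down arbitrarily without an improved Reed-type constant.
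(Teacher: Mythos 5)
Your proposal is correct and follows essentially the same route as the paper: bound $\omega(L(G)^t)\le\frac32\Delta^t$ via Theorem~\ref{thm:mainclique}, bound $\Delta(L(G)^t)\le(2+o(1))\Delta^t$ by the standard ball count, and feed both into the Reed-type interpolation of~\cite{HJK21}, whose explicit constant ($\alpha=0.119$ in the form $\chi(H)\le\lceil 0.881(\Delta(H)+1)+0.119\,\omega(H)\rceil$) indeed clears your threshold $\alpha>0.118$ and yields $0.881\cdot 2+0.119\cdot 1.5=1.9405<1.941$.
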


\begin{proof}
By Theorem~\ref{thm:mainclique} and~\cite[Thm.~1.6]{HJK21}, $\chi(L(G)^t) \le \left \lceil0.881(\Delta(L(G)^t)+1)+0.119\omega(L(G)^t\right \rceil \le \left \lceil0.881(2\Delta^t+1)+0.119\cdot1.5\Delta^t\right \rceil < 1.941\Delta^t$ provided $\Delta$ is taken large enough.
\end{proof}

\noindent
For $t=1$, Vizing's theorem states that $\chi(L(G)) \le \Delta+1$.
For $t=2$, the current best bound on the strong chromatic index~\cite{HJK21} is $\chi(L(G)^2) \le 1.772\Delta^2$ for all sufficiently large $\Delta$.
For $t>2$, note for comparison with Corollary~\ref{cor:fromreed} that the local edge density estimates for $L(G)^t$ proved in~\cite{KaKa14} combined with the most up-to-date colouring bounds for graphs of bounded local edge density~\cite{HJK21} yields only a bound of $1.999\Delta^t$ for all large enough $\Delta$.
We must say though that, for the best upper bounds on $\chi(L(G)^t)$, $t>2$, rather than bounding $\omega(L(G)^t)$ it looks more promising to pursue optimal bounds for the local edge density of $L(G)^t$, particularly for $t\in\{3,4,6\}$. We have left this to future study.

\subsection{Terminology and notation}

For a graph $G=(V,E)$, we denote the $i^{th}$ neighbourhood of a vertex $v$ by $N_i(v)$, that is, $N_i(v)=\{u \in V \mid d(u,v)=i\}$, where $d(u,v)$ denotes the distance between $u$ and $v$ in $G$.
Similarly, we define $N_i(e)$ as the set of vertices at distance $i$ from an endpoint of $e$.

Let $T_{k,\Delta}$ denote a tree rooted at $v$ of height $k$ (i.e.~the leafs are exactly $N_k(v)$) such that all non-leaf vertices have degree $\Delta$.
Let $T^1_{k,\Delta}$ be one of the $\Delta$ subtrees starting at $v$, i.e.~a subtree rooted at $v$ of height $k$ such that $v$ has degree $1$, such that $N_k(v)$ only contains leaves and all non-leaf vertices have degree $\Delta$.

\section{A bound on $\omega(L(G)^t)$ for $C_{2t+1}$-free $G$}\label{sec:2t+1Free}

In this section, we prove the following theorem.

\begin{thr}\label{thm:L^tforC_{2t+1}-free}
	Let $t \ge 2$ be an integer. Let $G$ be a $C_{2t+1}$-free graph with maximum degree $\Delta.$
	Then $\omega(L(G)^t) \le \lvert E(T_{t,\Delta}) \rvert$.
	When $t \in \{2,3,4,6\}$ equality can occur for infinitely many $\Delta$.
\end{thr}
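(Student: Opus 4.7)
The plan is to fix a maximum clique $F$ in $L(G)^t$, pick some edge $e_0 = uv \in F$, and bound $|F|$ by constructing an injection from $F$ into $E(T_{t,\Delta})$. The starting observation is that, for every $f \in F$, the condition $d_{L(G)}(e_0, f) \le t$ translates into the existence of a non-backtracking $G$-path from $\{u, v\}$ to $f$: one may pick an endpoint $x_f$ of $f$ closest in $G$ to $\{u, v\}$, together with a shortest path $P_f$ of length at most $t-1$ from $\{u, v\}$ to $x_f$. The final edge $f = x_f y_f$ is then recorded separately.

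The encoding I propose is $\phi(f) = (P_f, y_f)$, where $y_f$ is the non-$x_f$ endpoint of $f$. The path $P_f$ together with the extra choice of $y_f$ embeds naturally into $E(T_{t, \Delta})$: view the root of $T_{t, \Delta}$ as a marker for $\{u, v\}$; the $\Delta$ root-edges correspond to choosing a first neighbour out of $\{u, v\}$; and each subsequent vertex of $T_{t,\Delta}$ carries $\Delta - 1$ onward choices, matching the branching of a non-backtracking walk in $G$. The critical step is to show $\phi$ is injective, and this is exactly where the $C_{2t+1}$-free hypothesis is used. If $\phi(f) = \phi(f')$ for distinct $f, f'$, then (unwinding definitions) the two walks reach $y_f = y_{f'}$ via distinct penultimate vertices while sharing an initial vertex in $\{u, v\}$; combining them with the edge $e_0 = uv$ and the two last edges produces a closed walk of length exactly $2t+1$ in $G$. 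Since $G$ is $C_{2t+1}$-free this closed walk cannot be a simple cycle, so it must repeat an internal vertex; but a careful case analysis on where such a repetition sits shows it yields a strictly shorter walk from $\{u, v\}$ to $y_f$, contradicting the minimality in the choice of $P_f$ or $P_{f'}$. The main obstacle will be the bookkeeping that turns every alleged collision into a closed walk of length \emph{exactly} $2t+1$ (rather than some even length, where the hypothesis gives nothing).

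For the sharpness assertion when $t \in \{2, 3, 4, 6\}$, I invoke the classical incidence graph constructions for generalised polygons: for $t=2$ take $K_{\Delta, \Delta}$; for $t=3$, the point--line incidence graph of a projective plane of prime power order $q = \Delta - 1$; for $t=4$, the incidence graph of a generalised quadrangle of order $(q,q)$; and for $t=6$, that of a generalised hexagon of order $(q,q)$. Each such graph is bipartite (hence $C_{2t+1}$-free), $\Delta$-regular, has graph diameter exactly $t$, and has exactly $|E(T_{t, \Delta})|$ edges (as may be verified from the parameters $(q+1)$ and $(q+1)^{\text{something}}$ of each incidence structure). The diameter condition forces $L(G)^t$ to be the complete graph on $E(G)$, so $\omega(L(G)^t) = |E(G)| = |E(T_{t, \Delta})|$ for every such $\Delta = q+1$ with $q$ a prime power, yielding the claimed infinite family.
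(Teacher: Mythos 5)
Your upper-bound argument has a genuine gap, and it sits exactly at the point where the problem is hard. The code space of your map $\phi(f)=(P_f,y_f)$ does not embed into $E(T_{t,\Delta})$: the contracted root $\{u,v\}$ has up to $2(\Delta-1)$ neighbours outside $\{u,v\}$, not $\Delta$, so the number of non-backtracking paths of length at most $t-1$ from $\{u,v\}$, each extended by a final edge, is of order $2\Delta^t$ rather than $|E(T_{t,\Delta})|\approx\Delta^t$. Even a perfect injection into this code space only recovers the trivial bound $h_t(\Delta)\le 2\Delta^t$ mentioned in the introduction. Worse, the injectivity you plan to prove is vacuous: the pair $(P_f,y_f)$ determines $x_f$ (the endpoint of $P_f$) and hence determines $f=x_fy_f$, so $\phi$ is injective for free and the $C_{2t+1}$-free hypothesis does no work where you have placed it. What the hypothesis must actually do is restrict the \emph{image} of the encoding --- i.e.\ rule out that both of the ``$u$-side'' and ``$v$-side'' branches are fully populated --- and your proposal contains no mechanism for that.

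The paper's proof (Proposition~\ref{sub}) circumvents this factor of $2$ differently: it roots the exploration at a single vertex $v$ of maximum degree $j$ in the clique subgraph $H$ and exploits that every edge of $H$ must be within distance $t$ of \emph{every} pendant edge $vu_1,\dots,vu_j$ simultaneously. This leads to a partition of each BFS level $N_m$ into $A_m$ (vertices reached efficiently from some $u_i$) and $R_m$, a weight function $w(x)$ counting short paths from $\{u_1,\dots,u_j\}$ to $x$ with $\sum_{x\in A_m}w(x)\le j(\Delta-1)^{m-1}$, and separate bounds of $\frac{j}{\Delta}|E(T_{t,\Delta})|$ and $(\Delta-j)\left(\frac{1}{\Delta}|E(T_{t,\Delta})|-1\right)$ on the two parts, which sum to the claim. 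The $C_{2t+1}$-freeness enters only in Claim~\ref{cl:no_edge_inNi}, to forbid edges of $H$ inside $N_t$ (edges inside $N_i$ for $i<t$ are excluded by an extremality/subdivision argument). Your sharpness discussion via incidence graphs of generalised polygons is correct and matches the paper, but the upper bound as proposed does not go through.
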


\noindent
Since $\lvert E(T_{t,\Delta}) \rvert \le \Delta^t$, the expression is at most the bound desired for Conjecture~\ref{conj:upper}, and thus this implies Theorem~\ref{thm:2t+1ht}. In fact, the expression matches the order of the point--line incidence graphs of generalised polygons when $t \in \{2,3,4,6\}$, which are the examples for which equality holds.
On the other hand, by subdividing one edge of any these constructions, one can see in the cases $t\in\{2,3,4,6\}$ that the result fails if we omit the condition of $C_{2t+1}$-freeness.
We note that Theorem~\ref{thm:L^tforC_{2t+1}-free} is a generalisation of a result in~\cite{CKP20} which was specific to the case $t=2$.
It is also a stronger form of a result announced in~\cite{DeSl19eurocomb} for bipartite graphs.
One might wonder about excluding other cycle lengths, particularly even ones. Implicitly this was already studied in~\cite{KaPi18}, in that the local sparsity estimations there imply the following statement: for any $t\ge 2$ and even $\ell\ge 2t$, $\omega(L(G)^t) = o(\Delta^t)$ for any $C_\ell$-free graph of maximum degree $\Delta$.
Similarly, it would be natural to pursue a similar bound as in Theorem~\ref{thm:L^tforC_{2t+1}-free} but for an excluded odd cycle length (greater than $2t+1$), which was done for $t=2$ in~\cite{CKP20}.

The bound in Theorem~\ref{thm:L^tforC_{2t+1}-free} is a corollary of the following proposition.

\begin{prop}\label{sub}
	For fixed $\Delta$ and $t$, let $G$ be a $C_{2t+1}$-free graph with maximum degree $\Delta$ and $H\subseteq G$ be a subgraph of $G$ with maximum degree $\Delta_H$.
	Let $v$ be a vertex with degree $d_H(v)=\Delta_H=j$ and let $u_1,u_2, \ldots, u_j$ be its neighbours. Suppose that in $L(G)^t$, every edge of $H$ is adjacent to $vu_i$ for every $1 \le i \le j$.
	Then $\lvert E(H) \rvert \le \lvert E(T_{t,\Delta}) \rvert$.	
\end{prop}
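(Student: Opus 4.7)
Plan:

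The plan is to perform BFS from $v$ in $G$, setting $V_a := N_a^G(v)$, and for each edge $e = xy \in E(H)$ defining its layer $a(e) := \min\{d_G(x, v), d_G(y, v)\}$. Partition $E(H) = \bigsqcup_{a \ge 0} E_a$ accordingly and aim to show $\sum_a |E_a| \le |E(T_{t,\Delta})| = \sum_{a=0}^{t-1} \Delta(\Delta-1)^a$. The adjacency of each $e \in E(H)$ with $vu_1$ in $L(G)^t$ forces some endpoint of $e$ to lie within $G$-distance $t-1$ of $\{v, u_1\}$, hence $a(e) \le t$ always. For $0 \le a \le t-1$, the standard BFS count gives $|E_a| \le |V_a|(\Delta - 1) \le \Delta(\Delta-1)^a$ (each $x \in V_a$ with $a \ge 1$ has at most $\Delta - 1$ $G$-neighbours in $V_a \cup V_{a+1}$, since one of its $\Delta$ slots is occupied by a $V_{a-1}$ parent), together with $|E_0| = j \le \Delta$. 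These already sum to $|E(T_{t,\Delta})|$, so the real task reduces to showing $E_t = \emptyset$.

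Suppose for contradiction $e = xy \in E_t$. Applying the hypothesis to each $vu_i$ and using triangle inequalities, one shows $d_G(x, v), d_G(y, v) \in \{t, t+1\}$ with WLOG $d_G(x, v) = t$, and the index sets $I_x := \{i \in [j] : d_G(x, u_i) = t-1\}$ and $I_y$ analogously satisfy $I_x \cup I_y = [j]$. For $j \ge 2$, I choose $i \in I_x$, $i' \in I_y$ with $i \ne i'$ and construct the closed walk $W$ of length $2t+1$ consisting of a shortest $x$-to-$u_i$ path $P_1$, the edges $u_iv$ and $vu_{i'}$, a shortest $u_{i'}$-to-$y$ path $P_2$, and the edge $yx$. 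A BFS-layer check rules out all coincidences in $W$ except possibly that $P_1, P_2$ share an internal vertex $w$; such a shared $w$ yields a walk of length $t - 1$ from $x$ to $u_{i'}$ and another from $y$ to $u_i$ through $w$, forcing $d_G(x, u_{i'}) = d_G(y, u_i) = t - 1$, i.e.\ $i' \in I_x$ and $i \in I_y$. So if $I_x \ne I_y$, picking $i \in I_x \setminus I_y$ eliminates the obstruction and produces a simple $C_{2t+1}$, contradicting $C_{2t+1}$-freeness; by symmetry this forces $I_x = I_y = [j]$.

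I expect the main obstacle to be this residual case $I_x = I_y = [j]$, where both $x, y$ are at $G$-distance exactly $t - 1$ from every $u_i$ and the shared-vertex shortcut no longer produces a contradiction by itself. I would handle it by either pigeonholing over the many pairs $(i, i') \in [j]^2$ to locate a vertex-disjoint pair of shortest paths (again yielding $C_{2t+1}$), or by exploiting the rigidity — namely that $x, y$ must share many neighbours in $V_{t-1}$ — to overrun the $H$-degree cap $\Delta_H = j$ on one of them. The degenerate case $j = 1$ (so $H$ is a matching) is handled separately by a direct count of matching edges inside the $t$-ball of $\{v, u_1\}$, which is comfortably below $|E(T_{t,\Delta})|$.
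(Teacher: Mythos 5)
The central reduction --- ``the real task reduces to showing $E_t=\emptyset$'' --- is where the proof breaks: $E_t=\emptyset$ is simply false, because an edge of $H$ joining $N_t(v)$ to $N_{t+1}(v)$ is compatible with all the hypotheses. Concretely, take $t=2$, $\Delta=4$, let $G$ be the bipartite graph on $\{v,x\}\cup\{u_1,u_2,u_3,y\}$ with edges $vu_1,vu_2,vu_3,xu_1,xu_2,xu_3,xy$, and let $H$ consist of $vu_1,vu_2,vu_3,xu_1,xy$. Then $G$ is bipartite (hence $C_5$-free), $d_H(v)=\Delta_H=3$, and every edge of $H$ is within distance $2$ of every $vu_i$ in $L(G)$ (for $xy$ this holds because $d_G(x,u_i)=1$ for all $i$); yet $xy$ has $d_G(x,v)=2=t$ and $d_G(y,v)=3$, so $xy\in E_t$. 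Your own machinery signals the problem: for such an edge $I_y=\emptyset$ and $I_x=[j]$, so there is no $i'\in I_y$ with which to close the walk $W$, and indeed no $C_{2t+1}$ exists here. Since your layer bounds $\lvert E_a\rvert\le\Delta(\Delta-1)^a$ for $0\le a\le t-1$ already exhaust the entire budget $\lvert E(T_{t,\Delta})\rvert$, there is no slack left to absorb $N_t$--$N_{t+1}$ edges, and the counting scheme cannot be salvaged as stated.

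For comparison, the paper only ever proves the weaker fact that $N_t(v)$ induces no edge of $H$, and even that requires more than $C_{2t+1}$-freeness: it fixes an extremal choice of $H$ and uses a modification argument (delete the offending edge, attach two pendant edges) in the subcase where the meeting point of the two shortest paths is close to all $u_i$ --- a subcase essentially coinciding with your unresolved residual case $I_x=I_y=[j]$, which you only sketch. The edges from $N_t$ to $N_{t+1}$ are then not excluded but \emph{paid for}: the paper introduces a weight $w(a)$ counting length-$(t-1)$ paths from $a\in N_t$ to $\{u_1,\dots,u_j\}$, observes that a vertex of $N_t$ with a neighbour in $N_{t+1}$ must have $w(a)\ge j$, and uses $\sum_a w(a)\le j(\Delta-1)^{t-1}$ to show such expensive vertices are compensated by savings elsewhere. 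Both the missing case and the residual case therefore need genuinely new ingredients (extremality plus the weight function, or something equivalent) before your outline becomes a proof.
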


\begin{proof}
	For fixed $\Delta$, let $H$ and $G$ be graphs satisfying all conditions, such that $\lvert E(H) \rvert$ is maximised. This can be done since $\lvert E(H) \rvert$ is upper bounded by say $j\Delta ^t$.	
	With respect to the graph $G$, we write $N_i=N_i(v)$ for $0 \le i \le t+1.$
	We start proving a claim that makes work easier afterwards.
	\begin{claim}\label{cl:no_edge_inNi}
		For any $1 \le i \le t$, $H$ does not contain any edge between two vertices of $N_i$.
	\end{claim}
	
	\begin{claimproof}
		Suppose it is not true for some $i \le t-1$.
		Take an edge $yz \in E(H)$ with $y,z \in N_i$.
		Construct the graph $H'$ with $V(H')=V(H) \cup \{y',z'\}$ and $E(H')=E(H) \setminus yz \cup \{yy',zz'\}$, where $y'$ and $z'$ are new vertices, and let $G'$ be the corresponding modification of $G$.
		Then $H' \subseteq G'$ also satisfies all conditions in Proposition~\ref{sub} and $\lvert E(H') \rvert=\lvert E(H) \rvert+1,$ contradictory with the choice of $H$.
		
		Next, suppose there is an edge $yz \in E(H)$ with $y,z \in N_t$.
Take a shortest path from $u_1$ to $yz$, which is wlog a path $P_y$ from $u_1$ to $y$. Note that a shortest path $P_z$ from $v$ to $z$ will intersect $P_y$ since $G$ is $C_{2t+1}$-free. Let $w$ be the vertex in $V(P_y)\cap V(P_z)$ that minimises $d_G(w,z)$ and assume $w \in N_m,$ i.e.~$m=d_G(v,w)$ is the distance from $v$ to $w$.
The condition $w \in V(P_y)\cap V(P_z)$ ensures that $d_G(w,z)=d_G(w,y)$.	
		Furthermore note that $y$ and $z$ are interchangeable at this point, as both are at the same distance from $u_1$.

		If $d_G(w,u_i)=m-1$ for every $1 \le i \le j$, we can remove $yz$ again and add two edges $yy'$ and $zz'$ to get a graph $H'$ satisfying all conditions, leading to a contradiction again. This is the blue scenario illustrated in Figure~\ref{fig:CloseLookAtG&H}.
		
		In the other case there is some $1 <s \le j$ such that $d_G(w,u_s)>m-1$. Since $d_G(u_s,yz)=t-1$, wlog $d_G(u_s,z)=t-1$, there is a shortest path from $u_s$ to $z$ which is disjoint from the previously selected shortest path $P_y$ between $u_1$ and $y$. Hence together with the edges $u_1v, vu_s$ and $yz$, this forms a $C_{2t+1}$ in $G$, which again is a contradiction. This is sketched as the red scenario in Figure~\ref{fig:CloseLookAtG&H}.
	\end{claimproof}

\begin{figure}[h]
	\centering
	\begin{tikzpicture}[line cap=round,line join=roundx,x=1.3cm,y=1.3cm]
	\clip(-1,-0.7) rectangle (11,2.4);
	\draw [line width=1.1pt] (0,0.) -- (2.,2);
	\draw [line width=1.1pt] (0,0.) -- (2.,1.5);
	\draw [line width=1.1pt] (0,0.) -- (2.,0.);	
	\draw [dashed, line width=1.1pt] (0,0.) -- (2.,-0.5);
	
	\draw [dotted, line width=0.8pt] (2,0.) -- (2.,1.5);

	\draw[line width=1.1pt] (2,2)--(10,2) ;
	\draw[line width=1.1pt] (10,1)--(10,2) ;
	\draw[line width=1.1pt] (6,2)--(10,1) ;
		\draw[line width=1.1pt] (0,0)--(2,0.8) ;
	\draw[ultra thick, color=red] (7,1.75)--(2,0.8) ;
	\draw[ultra thick, color=blue] (6,2)--(2,0.8) ;
			\draw[ultra thick, color=blue] (6,2)--(2,1.5) ;
				\draw[ultra thick, color=blue] (6,2)--(2,0.) ;

	\draw [fill=xdxdff] (0,0.) circle (2.5pt);
	\draw [fill=xdxdff] (2.,0.) circle (2.5pt);
	\draw [fill=xdxdff] (2.,1.5) circle (2.5pt);
	\draw [fill=xdxdff] (2.,-.5) circle (2.5pt);
	\draw [fill=xdxdff] (2.,2.) circle (2.5pt);
	\draw [fill=xdxdff] (2,0.8) circle (2.5pt);
	
	\draw [fill=xdxdff] (10.,2.) circle (2.5pt);
	\draw [fill=xdxdff] (10.,1) circle (2.5pt);	
	\draw [fill=xdxdff] (6.,2.) circle (2.5pt);

	\coordinate [label=right:$y$] (A) at (10.05,2);
	\coordinate [label=right:$z$] (A) at (10.05,1);
	\coordinate [label=above:$w$] (A) at (6,2.05);
	
	\coordinate [label=left:$v$] (A) at (-0.1,0); 
	\coordinate [label=above left:$u_{1}$] (A) at (1.9,2.05); 
	\coordinate [label=above:$u_{2}$] (A) at (2,1.5); 
	
		\coordinate [label=left :$u_{s}$] (A) at (1.95,0.9); 

	\coordinate [label=above left:$u_{j}$] (A) at (1.9,0.05);

	\end{tikzpicture}
	\caption{Sketch of two scenarios (red and blue) referred to in the proof of Claim~\ref{cl:no_edge_inNi}.}
	\label{fig:CloseLookAtG&H}
\end{figure}
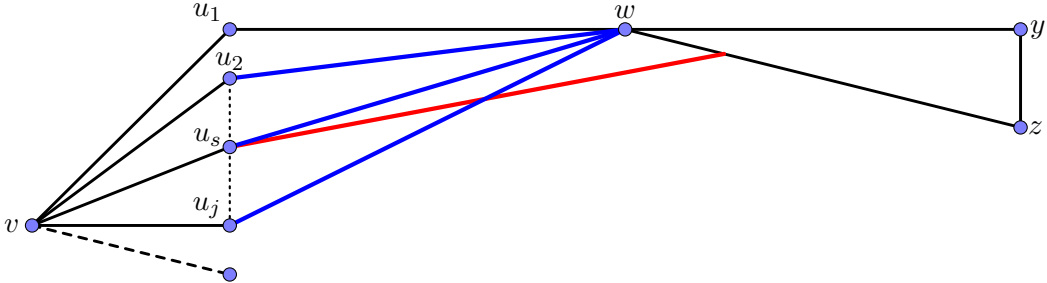

	For every $1 \le m \le t+1$, let $A_m$ be the set of all vertices $x$ in $N_m$ such that $d_G(v,x)=d_G(u_i,x)+1=m$ for at least one index $1 \le i \le j$ and let $R_m=N_m \backslash A_m.$
	Also let $A_0=\{v\}.$
	Let $A=\cup_{i=0}^{t+1} A_i$ and $R=\cup_{i=0}^{t+1} R_i$.
	We observe that a vertex in $R_{t+1}$ cannot be an endvertex of an edge of $H$. Indeed, assuming the contrary, the other endvertex of such an edge would be at distance $t-1$ from every $u_i$, $1 \le i \le j$ and in particular would belong to $A_t$, leading to a contradiction.
	Also we observe that there are no edges in $H$ between $R_t$ and $A$. By definition of $R_t$, any vertex $r \in R_t$ has no neighbour in $A_i$ where $i<t$, nor does it have a neighbour in $A_t$ by Claim~\ref{cl:no_edge_inNi}. To end with, an edge with endvertices in $R_t$ and $A_{t+1}$ is not connected to any edge $vu_s$, $1 \le s \le j$, in $L(G)^t$.
	As a consequence, the number of edges of $H$ which contain at least one vertex of $R$ can be upper bounded by $	|R_1| \cdot \left(|E(T_{t,\Delta}^{1})| -1\right)$, which equals \begin{equation}\label{eq:EH_R}
(\deg(v)-\Delta_H)\left(\frac1{\Delta}\lvert E(T_{t,\Delta}) \rvert -1 \right).
	\end{equation}
	
All other edges of $H$ are in the induced subgraph $H[A]$. We will now compute a bound on the number of those remaining edges.

We start with defining a weight function $w$ on the vertices $x$ in $A$ which will turn out to be useful.
For every $x \in A_m$ where $1 \le m \le t$, we define $w(x)$ to be equal to the number of paths (in $G$) of length $m-1$ between $x$ and $A_1$.
Note that by definition $w(x)$ is at least equal to the number of vertices $u_i \in A_1$ with $d_G(x,u_i)=m-1$ and by definition of $A_m$ this implies $w(x) \ge 1.$
An equivalent recursive definition of $w$ is the following: we let $w(u_i)=1$ for any $u_i \in A_1$ and for every vertex $x \in A_m$ where $m\ge 2$, we let $$w(x)= \sum_{y \in A_{m-1} \colon yx \in E(G)} w(y).$$
We observe by induction that
\begin{equation}\label{eq:sum_w}
\sum_{ x \in A_m} w(x) \le j(\Delta-1)^{m-1}
\end{equation} for every $1 \le m \le t.$ For $m=1$ this is by definition of $A_1$ and $j$.
For $m\ge 2$, we have by induction that
\begin{align*}
\sum_{ x \in A_m} w(x) &= \sum_{ x \in A_m} \sum_{y \in A_{m-1} \colon yx \in E(G)} w(y)\\
&=\sum_{ y \in A_{m-1}} \sum_{x \in A_m \colon yx \in E(G)} w(y)\\
&\le \sum_{ y \in A_{m-1}} (\Delta-1) w(y)\\
&\le j(\Delta-1)^{m-1}.
\end{align*}

Let $A^{'}_t=\{a \in A_t\mid w(a)<j\}$ and $A^*_t=\{a \in A_t\mid w(a) \ge j\}.$ We first count the edges that are incident to some fixed $a\in A^{'}_t$.
Note that $H$ contains no edges between $a$ and $A_{t+1}$ since for such an edge we would need that $a$ is connected by a path of length $t-1$ to every $u_i, 1 \le i \le j$ and thus in particular we would have $w(a) \ge j.$ By Claim~\ref{cl:no_edge_inNi} we also know that $a$ cannot be incident with an other vertex in $A_t$.
So we only need to count the edges in $H$ between $a$ and  $A_{t-1}$, and by definition of the weight function, this is bounded by $w(a).$

On the other hand, for every $a \in A^*_t$ there are at most $\Delta_H=j \le w(a)$ edges in $E(H)$ incident to $a$.
Having proven that for every  $a \in A_t$ there are at most $w(a)$ edges in $H[A]$ incident with $a$, we conclude (remembering~\eqref{eq:sum_w}) that there are at most $\sum_{ x \in A_t} w(x) \le j(\Delta-1)^{t-1}$ edges in $E(H[A])$ having a vertex in $A_{t}.$
Also we have for every $1 \le m \le t-1$ that the number of edges between $A_{m-1}$ and $A_m$ is bounded by $j(\Delta-1)^{m-1}.$
Hence $$\lvert E(H[A]) \rvert \le \sum_{m=1}^t j(\Delta-1)^{m-1}=\frac{\Delta_H}{\Delta}\lvert E(T_{t,\Delta}) \rvert.$$
Together with~\eqref{eq:EH_R} on the number of edges that intersect $R$, this gives the result as $\deg(v)\le \Delta$ by definition.
\end{proof}

An inspection of the proof yields that the extremal graphs $H$ for Proposition~\ref{sub} satisfy Claim~\ref{cl:no_edge_inNi}, $R=\emptyset$ and for every $x \in A_m$ where $0 \le m \le t-1$, there are exactly $\Delta -1$ edges towards $A_{m+1}.$
Hence such an extremal graph $H$ is exactly $T_{t,\Delta}$ where possibly some of its leaves are identified as one (as long as the maximum degree is still $\Delta$). Let us call such a graph a \emph{quasi-$T_{t,\Delta}$}.\\

Next, we discuss some properties that should be satisfied by any graph that attains the bound of Theorem~\ref{thm:L^tforC_{2t+1}-free} (provided such a graph exists for the given values of $t$ and $\Delta$!).
Let $H \subseteq G$ be a graph such that $E(H)$ is a clique in $L(G)^t$ and $v$ be a vertex of maximum degree in $H$, which maximises $|E(H)|$ among all choices for $G$ and $H$.
Let $N_H(v)=\{u_1, \ldots, u_j\}$. Then in particular, in $L(G)^t$  every edge of $H$ is adjacent to every edge $vu_i$, for all $1\le i \le j$. 

So by Proposition~\ref{sub}, for every vertex $v$ of degree $\Delta$
we observe locally a quasi-$T_{t, \Delta}$ again, and in particular every neighbour of such a $v$ has degree $\Delta$ (for $t \ge 2$). 
So $H$ is $\Delta$-regular and in particular a connected component of $G$. So it is not a tree and hence has some girth.
The girth is at least $2t$ (as for every vertex we locally have a quasi-$T_{t, \Delta}$), but it cannot be $2t+1$ since $G$ is $C_{2t+1}$-free and it cannot be $2t+2$ or more since $E(H)$ is a clique in $L(G)^t$. 
Also we observe that for every $a \in A_t^*$ the condition that $w(a)\ge j$ implies that it has $\Delta$ neighbours in $A_{t-1}$ as these all have a weight function equal to $1$ and so it has no neighbours in $A_{t+1}.$
Hence $H$ is a $\Delta$-regular graph with girth $2t$ and diameter $t$.
In particular they need to be Moore graphs and consequently by~\cite{Sing66, BI73} the extremal graphs are polygons when $t \ge 3.$

\section{A general bound on $\omega(L(G)^t)$}\label{sec:general}

When $H\subseteq G$ is a graph whose edges form a clique in $L(G)^t$, it implies in particular that all edges adjacent to a specific vertex $v$ are at distance at most $t-1$ from all other edges. 
As $\lvert E(T_{t,\Delta}) \rvert \le \Delta^t$, the following proposition implies Theorem~\ref{thm:mainclique}.

\begin{prop}\label{substrong}
	For fixed $\Delta$ and $t$, let $G$ be a graph with maximum degree $\Delta$ and
	$H\subseteq G$ be a subgraph of $G$ with maximum degree $\Delta_H$.
	Let $v$ be a vertex with degree $d_H(v)=\Delta_H=j$ and let $u_1,u_2, \ldots, u_j$ be its neighbours. Suppose that in $L(G)^t$,  every edge of $H$ is adjacent to  $vu_i$ for every $1 \le i \le j$.
	Then $\lvert E(H) \rvert \le \frac{3}{2}\lvert E(T_{t,\Delta}) \rvert$.	
\end{prop}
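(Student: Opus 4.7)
The plan is to adapt the proof of Proposition~\ref{sub} to the general (not necessarily $C_{2t+1}$-free) setting. First I would fix extremal $H$ and $G$ maximising $\lvert E(H)\rvert$, and identify exactly which ingredients of the earlier proof required $C_{2t+1}$-freeness. The only such place is the $i=t$ case of Claim~\ref{cl:no_edge_inNi}: the portion ruling out edges within $N_i$ for $1\le i\le t-1$ uses only the ``replace an internal edge by two new pendant edges'' argument and still applies verbatim. Thus I may assume $H$ has no edges within $N_i$ for any $i\le t-1$, while edges within $N_t$ are now permitted.

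All other structural ingredients carry over: the decomposition of $N_m$ into $A_m$ and $R_m$; the weight function $w$ on $A$ with $\sum_{x\in A_m} w(x)\le j(\Delta-1)^{m-1}$; the split $A_t=A_t^*\cup A_t'$ according to $w(a)\ge j$ or $w(a)<j$; the bound $(\deg(v)-j)(\tfrac1\Delta\lvert E(T_{t,\Delta})\rvert-1)$ for edges of $H$ incident to $R$; and the observations that $R_{t+1}$ contains no endpoint of an $H$-edge, that no $H$-edge joins $R_t$ to $A$, and that an $H$-edge from $a\in A_t$ to $A_{t+1}$ forces $a\in A_t^*$. None of these arguments used $C_{2t+1}$-freeness. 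The new feature to control is the set $E_2$ of $H$-edges inside $A_t$: for any $aa'\in E_2$ we have $d(a,v)=d(a',v)=t$, so adjacency of $aa'$ to each $vu_s$ in $L(G)^t$ gives $\min(d(a,u_s),d(a',u_s))\le t-1$ for every $s\in\{1,\ldots,j\}$. Via the weight function this translates into the \emph{heaviness inequality} $w(a)+w(a')\ge j$ for every $aa'\in E_2$.

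To finish, writing $E_1$ and $E_3$ for the $H$-edges from $A_t$ to $A_{t-1}$ and to $A_{t+1}$, a case split on $A_t^*$ versus $A_t'$ shows $\deg_H(a)-\deg_{E_2}(a)\le w(a)$ for every $a\in A_t$ (for $a\in A_t^*$ use $\deg_H(a)\le j\le w(a)$; for $a\in A_t'$ use that there are no edges from $a$ to $A_{t+1}$, so the left side counts only edges to $A_{t-1}$, which is $\le w(a)$). Summing over $A_t$ gives $E_1+E_3\le\sum_{a\in A_t} w(a)\le j(\Delta-1)^{t-1}$. The crux is then to bound $E_2\le\tfrac12 j(\Delta-1)^{t-1}$; combining this with the lower-level counts $\sum_{m=1}^{t-1} j(\Delta-1)^{m-1}$ and the $R$-contribution, and using $\deg(v)+j/2\le\tfrac32\Delta$ together with $(\Delta-1)^{t-1}\le\tfrac1\Delta\lvert E(T_{t,\Delta})\rvert$, would yield $\lvert E(H)\rvert\le\tfrac32\lvert E(T_{t,\Delta})\rvert$ as desired. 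The main obstacle is precisely this sharper bound on $E_2$: a naive double count of $\sum_{aa'\in E_2}(w(a)+w(a'))=\sum_a w(a)\deg_{E_2}(a)$ using $\deg_{E_2}(a)\le j$ only gives $E_2\le j(\Delta-1)^{t-1}$, which is a factor of two too weak. I expect the factor $\tfrac12$ to emerge from exploiting the rigid interaction between $E_1$ and $E_2$ at low-weight vertices $a\in A_t'$ (where edges within $A_t$ force the other endpoint to have weight $\ge j-w(a)$), so that the slack $w(a)-e_1(a)$ available at vertices where $e_2(a)$ is large is limited by the global budget $\sum w(a)\le j(\Delta-1)^{t-1}$.
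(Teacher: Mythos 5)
There is a genuine gap at exactly the point you flag as ``the crux''. Your setup faithfully reproduces the paper's: the pendant-edge replacement argument kills edges inside $N_i$ for $i\le t-1$ without any $C_{2t+1}$-freeness, the sets $A_m,R_m$, the weight function, the split $A_t=A_t'\cup A_t^*$, and the heaviness inequality $w(a)+w(a')\ge j$ for edges inside $A_t$ all carry over as you say. But the bound $\lvert E_2\rvert\le\tfrac12 j(\Delta-1)^{t-1}$ is only stated as something you ``expect'', not proved, and as a \emph{standalone} inequality (separate from $E_1+E_3\le\sum_{a\in A_t}w(a)$) it is not a consequence of the constraints you have derived. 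Consider a configuration in which every $a\in A_t'$ has $w(a)=j/2$, has $m(a)=0$ neighbours of $H$ in $A_{t-1}$ (its weight coming from edges of $G\setminus H$), and has $\deg_{E_2}(a)=j$, with each edge of $E_2$ joining two vertices covering complementary halves of $\{u_1,\dots,u_j\}$ so that $w(a)+w(a')\ge j$ holds. Then $\lvert E_2\rvert=\sum_{a\in A_t}w(a)$, twice your target. The naive double count you dismiss is in fact essentially tight for $E_2$ alone; the factor $\tfrac12$ cannot be recovered without charging $E_2$ against the \emph{unused} part of the budget for $E_1$.

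That joint amortisation is precisely what the paper does, and it is the missing idea. For $a\in A_t'$ one sets $f(a)=\frac{\frac32 w(a)-m(a)}{j-m(a)}$, where $m(a)$ is the number of $H$-neighbours of $a$ in $A_{t-1}$, and proves the elementary inequality (Claim~\ref{claim:technical}) that $f(a_1)+f(a_2)\ge 1$ whenever $w(a_1)+w(a_2)\ge j$ with $m(a_i)\le w(a_i)<j$. Since $a$ has at most $j-m(a)$ neighbours inside $A_t'$, summing $f(a_1)+f(a_2)$ over the edges of $H[A_t']$ and adding the $m(a)$ edges down to $A_{t-1}$ gives $\sum_{a\in A_t'}\bigl((j-m(a))f(a)+m(a)\bigr)=\tfrac32\sum_{a\in A_t'}w(a)$, which together with the bound $j\le w(a)$ on degrees in $A_t^*$ yields the required $\tfrac32\sum_{x\in A_t}w(x)\le\tfrac32 j(\Delta-1)^{t-1}$ for all edges meeting $A_t$. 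So your proposal is a correct skeleton with the decisive step absent, and the particular intermediate inequality you propose to fill it with would fail.
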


\begin{proof}
	We do this analogously to the proof of Proposition~\ref{sub}.
	For fixed $\Delta$, let $H$ and $G$ be graphs satisfying all conditions, such that $\lvert E(H) \rvert$ is maximized (which is again possible since $j\Delta ^t$ is an upper bound for $\lvert E(H) \rvert$). 

	It suffices to show that $\lvert E(H) \rvert \le \frac{3}{2} \lvert E(T_{t,\Delta}) \rvert$. 

	By the proof of Claim~\ref{cl:no_edge_inNi} we know that for any $1 \le i \le t-1$, the set $N_i$ does not induce any edges of $H$ (but this is not necessarily true anymore for $N_t$).	
	
	Define $A_m, R_m$, the weight function $w$, $A^{'}_t$ and $A_t^*$ as has been done in the proof of Proposition~\ref{sub}.

As before, the number of edges that (are not induced by $N_t$ and) use at least one vertex of $R$ is bounded by~\eqref{eq:EH_R}. Also, we again have for every $1 \le m \le t-1$ that the number of edges between $A_{m-1}$ and $A_m$ is bounded by $j(\Delta-1)^{m-1}.$  Furthermore, $R_t$ does not induce any edge of $H$, because such an  edge would be at distance larger than $t$ from $vu_1$. Thus the number of edges of $H$ that are either disjoint from $A_t$, or join $A_t$ and $R \backslash R_{t}$, is at most

\begin{equation}\label{eq:EH_bla}
(\Delta-j)\left(\frac1{\Delta}\lvert E(T_{t,\Delta}) \rvert -1 \right) + \sum_{m=1}^{t-1} j(\Delta-1)^{m-1}.
	\end{equation}

We will derive that the remaining edges of $H$ (which all intersect $A_t$) can be bounded by a linear combination of the weight functions $w(a)$ of the vertices $a\in A_t$.\\

	For every $a \in A_t^*$ there are at most $j \le w(a)$ edges in $E(H)$ having $a$ as one of its endvertices. So let us now focus on the edges that intersect $A^{'}_t$.
	
	We observe that there are no edges in $H$ between any $a \in A^{'}_t$ and $r \in R_t$, because there is some $u_i$ such that $d(a,u_i) \ge t$, which implies that $vu_i$ and $ar$ would be at distance larger than $t$. For the same reason $H$ has no edges between $A^{'}_t$ and $A_{t+1}$.\\
	
	Finally, we want to count the edges between $A_{t-1}$ and $A^{'}_t$, as well as those that are induced by $A^{'}_t$.
	We will prove that their number is bounded by $\frac 32 \sum_{a \in A^{'}_t} w(a).$

For that, we need the following technical claim.

	\begin{claim}\label{claim:technical}
		Let $j$ be fixed and assume $j>x \ge m>0$ and $j>y \ge n>0$ with $x+y \ge j$. Then 
		$$\frac{ \frac32 x -m}{j-m}+\frac{ \frac32 y -n}{j-n} \ge 1.$$
		Equality occurs if and only $m=n=x=y=\frac j2.$
	\end{claim}
	\begin{claimproof}
		Multiplying both sides with the positive factor $2(j-m)(j-n)$, we need to prove that $3(x+y)j-3xn-3ym+2mn \ge 2j^2$.
	For fixed $j,x$ and $y$ the left hand side is minimal when $m=x$ and $n=y$. This reduces to proving that $3(x+y)j -4xy \geq 2j^2.$
But this is true since
	\begin{align*}
	3(x+y)j -4xy-2j^2 &= 0.25 j^2 - (x+y-1.5j)^2 + (x-y)^2\\
	&=(2j-(x+y))\cdot(x+y-j)+(x-y)^2\\&\ge 0,
	\end{align*}
	as $j \le x+y <2j$, i.e.~$|x+y-1.5j| \le 0.5j$.
	\end{claimproof}

For every $a\in A_t^{'}$, let $m(a)$ denote the number of neighbours (in $H$) of $a$ in $A_{t-1}$ and let $q(a)$ denote the number of neighbours (in $H$) of $a$ in $A^{'}_t$.
Furthermore, we define $f(a)=\frac{ \frac32 w(a) -m(a)}{j-m(a)}$.

Suppose $H$ contains an edge $e$ between two vertices $a_1, a_2 \in A^{'}_t$. Then $w(a_1)+w(a_2) \ge j$, since $a_1a_2$ must be within distance $t-1$ of each of $vu_1,vu_2,\ldots,vu_j$. Hence by Claim~\ref{claim:technical}  (applied with $m=m(a_1)$, $n=m(a_2)$, $x=w(a_1)$ and $y=w(a_2)$), we obtain that $f(a_1)+f(a_2)\geq 1$ for every edge $a_1a_2$ of $H[A^{'}_t]$. 

From this it follows that $|E(H[A^{'}_t])|  \leq \sum_{a_1a_2 \in E(H[A^{'}_t])} f(a_1)+f(a_2)$. The right hand side can further be rewritten as $ \sum_{a \in A^{'}_t} q(a) \cdot f(a)$. Since every vertex $a \in A^{'}_t$ has $q(a) \leq j-m(a)$ neighbours in $A^{'}_t$ and has $m(a)$ neighbours in $A_{t-1}$, we conclude that
the number of edges of $H$ that are either induced by $A^{'}_t$ or join  $A^{'}_t$ and $A_{t-1}$ is at most

\begin{align*}
\sum_{a \in A^{'}_t} \left((j-m(a)) \cdot f(a) + m(a) \right) &=  \sum_{a \in A^{'}_t} \frac{3}{2}  w(a).
\end{align*}

Thus the number of edges in $E(H)$ using
at least one vertex in $A_t$ is bounded by $$\sum_{ a \in A^{'}_t} \frac32 w(a) +\sum_{ a \in A^{*}_t} w(a)  \le \sum_{ x \in A_t} \frac32 w(x),$$
which (see the derivation of~\eqref{eq:sum_w}) is at most $\frac{3}{2} j (\Delta-1)^{t-1}$.
Summing this and~\eqref{eq:EH_bla}, we conclude that $H$ has fewer than $(\Delta-j)\left(\frac1{\Delta}\lvert E(T_{t,\Delta}) \rvert -1 \right) + \frac{3}{2}\frac{j}{\Delta} |E(T_{t,\Delta})| \leq \frac{3}{2} |E(T_{t,\Delta})|$ edges.	
\end{proof}

Note that the exact maximum in Proposition~\ref{substrong} is $\sum_{m=1}^{t-1} \Delta( \Delta-1)^{m-1}+\frac32 \Delta( \Delta-1)^{t-1}$ and this can be attained when $\Delta$ is even.  
For example when $t=2$, the following example in Figure~\ref{fig:beatC5} shows that the blow-up of a $C_5$ is not extremal anymore when only taking into account the weaker conditions from Proposition~\ref{substrong}.
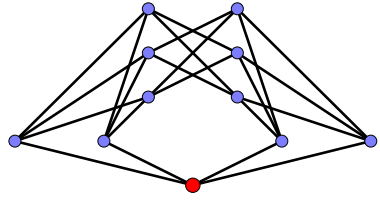
\begin{figure}[h]
	\centering
	\scalebox{0.9}{
		\rotatebox{90}{
			\begin{tikzpicture}[line cap=round,line join=roundx,x=1.3cm,y=1.3cm]
			\clip(1.4,-2.1) rectangle (3.6,2.5);
			\draw [line width=1.1pt] (1.5,0.) -- (2.,2);
			\draw [line width=1.1pt] (1.5,0.) -- (2.,1);
			\draw [line width=1.1pt] (1.5,0.) -- (2.,-1);	
			\draw [line width=1.1pt] (1.5,0.) -- (2.,-2);
			
			\draw [line width=1.1pt](3,-0.5)--(2.5,0.5);
			\draw [line width=1.1pt](3,-0.5)--(3.5,0.5);
			\draw [line width=1.1pt](2.5,-0.5)--(3,0.5);
			\draw [line width=1.1pt](2.5,-0.5)--(3.5,0.5);
			\draw [line width=1.1pt](3.5,-0.5)--(3,0.5);
			\draw [line width=1.1pt](3.5,-0.5)--(2.5,0.5);

			\draw [line width=1.1pt](2.,2)--(2.5,0.5);
			\draw [line width=1.1pt](2.,2)--(3.5,0.5);
			\draw [line width=1.1pt](2.,2)--(3,0.5);
			\draw [line width=1.1pt](2.,1)--(3.5,0.5);
			\draw [line width=1.1pt](2.,1)--(3,0.5);
			\draw [line width=1.1pt](2.,1)--(2.5,0.5);
			
			\draw [line width=1.1pt](3,-0.5)--(2.,-1);
			\draw [line width=1.1pt](3.5,-0.5)--(2.,-1);
			\draw [line width=1.1pt](2.5,-0.5)--(2.,-1);
			\draw [line width=1.1pt](2.5,-0.5)--(2.,-2);
			\draw [line width=1.1pt](3,-0.5)--(2.,-2);
			\draw [line width=1.1pt](3.5,-0.5)--(2.,-2);

			\draw [fill=red] (1.5,0.) circle (3pt);
			\draw [fill=xdxdff] (2.,1) circle (2.5pt);
			\draw [fill=xdxdff] (2.,-2.) circle (2.5pt);
			\draw [fill=xdxdff] (2.,-1) circle (2.5pt);
			\draw [fill=xdxdff] (2.,2.) circle (2.5pt);

			\draw [fill=xdxdff] (3,-0.5) circle (2.5pt);
			\draw [fill=xdxdff] (2.5,-0.5) circle (2.5pt);	
			\draw [fill=xdxdff] (3.5,-0.5) circle (2.5pt);

			\draw [fill=xdxdff] (3,0.5) circle (2.5pt);
			\draw [fill=xdxdff] (2.5,0.5) circle (2.5pt);	
			\draw [fill=xdxdff] (3.5,0.5) circle (2.5pt);
			\end{tikzpicture}}}
	\caption{An extremal graph for Proposition~\ref{substrong} for $\Delta=4$ and $t=2$.}
	\label{fig:beatC5}
\end{figure}

\section{Determination of $h_3(3)$}\label{sec:subcubic}

\begin{proof}[Proof of Theorem~\ref{thm:cubic}]
	Let $G=(V,E)$ be a graph of maximum degree $3$ such that the line graph $L(G)$ of $G$ has diameter at most $3$, i.e.~$L(G)^3$ is a clique. If we can show that $G$ must have at most $22$ edges, then the result is proven. Suppose to the contrary that $|E|\ge23$. The proof proceeds through a series of claims that establish structural properties of $G$.
	
	In each claim, we will estimate $|E|$ by performing a breadth-first search rooted at some specified edge $e$ up to distance $3$. To avoid repetition, let us set out the notation we use each time. We write $e=uv$. Let $u_0$ and $u_1$ be the two neighbours of $u$ other than $v$ (if $u$ has degree $3$). For $i\in\{0,1\}$, let $u_{i0}$ and $u_{i1}$ be the two neighbours of $u_i$ other than $u$ (if $u_i$ has degree $3$). For $i,j\in\{0,1\}$, let $u_{ij0}$ and $u_{ij1}$ be the two neighbours of $u_{ij}$ other than $u_{i}$ (if $u_{ij}$ has degree $3$). Similarly, define $v_i$, $v_{ij}$, $v_{ijk}$ for $i,j,k\in\{0,1\}$.
	
	\begin{claim}\label{clm:triangle}
		$G$ contains no triangle, loop or multi-edge.
	\end{claim}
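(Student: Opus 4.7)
The plan is to suppose that $G$ contains at least one of a triangle, loop, or multi-edge, and then show that $|E(G)|\le 20$, contradicting the standing hypothesis $|E(G)|\ge 23$. In each case I would pick an edge $e=uv$ witnessing the defect (any edge of the triangle; one of two parallel edges; or the edge from a loop-carrying vertex $u$ to any neighbour $v$, noting that if the vertex carrying the loop has no other neighbours then that loop is at infinite $L(G)$-distance from any other edge, forcing $G$ to consist of a single loop), and then count the edges of $G$ by their breadth-first layers relative to $\{u,v\}$.

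Write $V_i$ for the set of vertices of $G$ at distance exactly $i$ from $\{u,v\}$ in $G$, and let $E_{ij}$ denote the set of edges of $G$ with one endpoint in $V_i$ and the other in $V_j$. Since $L(G)^3$ is a clique, every edge of $G$ lies within $L(G)$-distance $3$ of $e$, which forces $V(G)=V_0\cup V_1\cup V_2\cup V_3$, $E_{ij}=\emptyset$ whenever $|i-j|\ge 2$, and $E_{33}=\emptyset$. Thus
\[
|E(G)|=|E_{00}|+|E_{01}|+|E_{11}|+|E_{12}|+|E_{22}|+|E_{23}|.
\]
Handshaking in each layer, combined with $\Delta(G)\le 3$, yields
\[
2|E_{00}|+|E_{01}|\le 6,\quad |E_{01}|+2|E_{11}|+|E_{12}|\le 3|V_1|,\quad |E_{12}|+2|E_{22}|+|E_{23}|\le 3|V_2|,
\]
and since every vertex of $V_k$ has at least one neighbour in $V_{k-1}$, we also have $|V_k|\le |E_{k-1,k}|$. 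Combining the third inequality with $|V_2|\le |E_{12}|$ gives the key bound $|E_{22}|+|E_{23}|\le 2|E_{12}|$.

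Each defect then forces extra savings at the first layer. A triangle on $u,v,w$ makes $w$ a shared neighbour of $u$ and $v$, so $|V_1|\le 3$ (while $|E_{00}|=1$ and $|E_{01}|\le 4$). A multi-edge between $u$ and $v$ makes $|E_{00}|=2$ and consumes two degree-slots at each of $u,v$, forcing $|E_{01}|\le 2$ and $|V_1|\le 2$. A loop at $u$ contributes $2$ to $\deg(u)$, so together with $e$ it saturates $u$; hence $|E_{00}|=2$, $|E_{01}|\le 2$, and $|V_1|\le 2$. Feeding these into the constraints and maximising $|E(G)|$ as a linear function is routine: the optimum sits at $|E_{11}|=|E_{22}|=0$ with $|E_{12}|$ as large as possible, and a direct calculation gives $|E(G)|\le 1+4+5+10=20$ in the triangle case and $|E(G)|\le 2+2+4+8=16$ in the multi-edge and loop cases.

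The main obstacle is really just bookkeeping: one has to be careful about how loops enter the handshaking identity $\sum_{v\in V_0}\deg(v)=2|E_{00}|+|E_{01}|$ (a loop contributes $2$ to the degree sum but is counted once in $|E_{00}|$), and one must verify that the optimum of the resulting linear programme actually sits at the extreme point claimed. Conceptually, the content of the claim is simply that any of the three forbidden configurations forces enough local edges to be concentrated near $e$ that fewer than $23$ edges can fit within $L(G)$-distance $3$.
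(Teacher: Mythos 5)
Your proposal is the same breadth-first count as the paper's, just reorganised: the paper roots the search at the defective edge and bounds the edges within $L(G)$-distance $3$ by hanging trees $T^1_{3,3}$ and $T^1_{2,3}$ off the free degree slots (getting $3+2\cdot 7+3=20$, $1+7=8$ and $2+2\cdot 7=16$ for triangle, loop and multi-edge respectively), whereas you bound the same quantity via layer handshaking and a small linear programme. Your setup and the loop/multi-edge computations are correct. In the triangle case, however, the optimum of the LP you actually wrote down is not $20$ but $22$: the objective $1+|E_{01}|+|E_{11}|+3|E_{12}|$ subject to $|E_{01}|+2|E_{11}|+|E_{12}|\le 3\min(|V_1|,|E_{01}|)\le 9$ is maximised at $|E_{01}|=3$, $|E_{11}|=0$, $|E_{12}|=6$, $|E_{23}|=12$, giving $22$ --- precisely the verification you flagged as needing care, and the point $(|E_{01}|,|E_{12}|,|E_{23}|)=(4,5,10)$ you evaluate is feasible but not optimal. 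To recover $20$ you must add the constraint that the triangle apex $w$ sends \emph{two} edges into $V_0$, so $|E_{01}|\ge |V_1|+1$, which together with $|V_1|\le 3$ pins the optimum at $|E_{01}|=4$. This is a slip rather than a fatal gap: $22<23$ still contradicts the standing hypothesis, so the claim as stated survives, but the asserted bound $|E(G)|\le 20$ does not follow from the constraints you listed.
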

	
	\begin{claimproof}
		These $3$ cases are straightforwardly bounded by the breadth-first search.
		If the edge $e$ is in a triangle, $|E|=|N_{L(G)^3}[e]| \le |E(K_3)|+2\cdot|E(T^1_{3,3})|+|E(T^1_{2,3})|= 3+2\cdot7+3= 20$.
		Analogously, if the edge $e$ is a loop, one obtains $|E|=|N_{L(G)^3}[e]| \le 1+7=8$.
		If the edge $e$ has a parallel edge then $|E|=|N_{L(G)^3}[e]| \le 2+2\cdot7= 16$.
	\end{claimproof}

\begin{claim}\label{clm:regular}
	$G$ is $3$-regular, and so $|E|$ is divisible by $3$.
\end{claim}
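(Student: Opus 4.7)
My plan is a BFS-ball count in the spirit of Claim~\ref{clm:triangle}. Since $L(G)^3$ is a clique, for every edge $e$ we have $|E(G)| = |N_{L(G)^3}[e]|$, so forcing this ball to have at most $22$ edges will contradict the hypothesis $|E(G)|\ge 23$. Isolated vertices affect neither $E(G)$ nor $L(G)$, so I would start by discarding them and thereby assume every vertex of $G$ has degree at least $1$.

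Next I would suppose for contradiction that $G$ has a vertex of degree at most $2$, and pick an edge $e=uv$ with $d_G(u)\le 2$. By Claim~\ref{clm:triangle}, $G$ is triangle-free, loopless and simple, so a BFS in $L(G)$ from $e$ up to distance $3$ is captured by the tree-like union consisting of $e$ itself together with $d_G(u)-1$ copies of $T^1_{3,3}$ rooted at $u$ and $d_G(v)-1$ copies of $T^1_{3,3}$ rooted at $v$. Any identifications forced by cycles of length $\ge 4$ in $G$ can only merge counted edges and never create new ones, so the tree yields a genuine upper bound:
\[
|E(G)| = |N_{L(G)^3}[e]| \le 1 + \bigl((d_G(u)-1)+(d_G(v)-1)\bigr)\,|E(T^1_{3,3})| \le 1 + 3\cdot 7 = 22,
\]
which contradicts $|E(G)|\ge 23$. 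Hence every vertex of $G$ has degree exactly $3$, and then handshaking $2|E(G)| = 3|V(G)|$ gives $3 \mid |E(G)|$.

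I do not anticipate any real obstacle: this is a direct analogue of Claim~\ref{clm:triangle}. The only point that warrants a brief justification is that the tree bound remains valid in the presence of cycles of length $\ge 4$ in $G$, and that is immediate since merging vertices in a BFS tree only merges edges rather than creating new ones.
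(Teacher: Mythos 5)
Your proposal is correct and is essentially the paper's own argument: both pick an edge $e=uv$ incident to a vertex of degree at most $2$ and bound $|E|=|N_{L(G)^3}[e]|$ by the BFS ball $1+3\cdot|E(T^1_{3,3})|=22$, contradicting $|E|\ge 23$. Your explicit remarks about isolated vertices and about identifications only merging counted edges are harmless elaborations of what the paper leaves implicit.
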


\begin{claimproof}
	If not, say, $v$ has degree at most $2$, then, say, $v_1$, $v_{1j}$, $v_{1jk}$ are undefined, and so $|E|=|N_{L(G)^3}[e]| \le 1+3\cdot7=22$, a contradiction.
\end{claimproof}

\begin{claim}\label{clm:C4}
$G$ contains no $4$-cycle.
\end{claim}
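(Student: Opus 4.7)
Suppose for contradiction that $G$ contains a 4-cycle. Then some edge $e = uv$ lies on a 4-cycle; after relabelling, we may take this 4-cycle to be $u,v,v_0,u_0$, so that in the BFS notation we have $u_{00} = v_0$ and $v_{00} = u_0$. The plan is first to force a tight structure on $G$ via a BFS from $e$, then to deduce further rigidity from an auxiliary BFS from $uu_0$, and finally to obtain a contradiction via a third BFS from $u_0 u_{01}$.

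The first BFS (from $e$) proceeds as follows. The 4-cycle identifications give $|N_{\le 2}(e)| \le 12$, and at least $12$ edges already lie in $E(N_{\le 2}(e))$: namely $uv$, the four $N_0$--$N_1$ edges, the 4-cycle edge $u_0v_0$, and the six $N_1$--$N_2$ edges. Since $G$ is $3$-regular (Claim~\ref{clm:regular}) and $L(G)^3$ is a clique, every edge of $G$ has an endpoint in $N_{\le 2}(e)$, so double-counting yields $|E(G)| \le 3|N_{\le 2}(e)| - |E(N_{\le 2}(e))| \le 24$. Together with $|E(G)|$ divisible by $3$ and $|E(G)|\ge 23$, equality must hold: $|V(G)|=16$, $|N_3(e)|=4$, there are no edges inside $N_2(e)$ or inside $N_3(e)$, and the bipartite graph between $N_2(e)$ and $N_3(e)$ has degrees $(2,3)$. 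Write $S(x)$ for the set of $N_3(e)$-neighbours of a vertex $x\in N_2(e)$.

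The second BFS, from $e' = uu_0$, exploits the $L(G)^3$-clique hypothesis once more: the set $V\setminus N_{\le 2}(e')$ must also be independent in $G$. In the rigid structure this outer set equals $\{v_{10}, v_{11}\} \cup (N_3(e) \setminus S(u_{01}))$, a $4$-vertex set; the absence of edges inside $N_2(e)$ and inside $N_3(e)$ handles two of its pairs, so independence of the remaining pairs forces $S(v_{10}) = S(v_{11}) = S(u_{01})$, whence each $\alpha \in S(u_{01})$ has $G$-neighbourhood exactly $\{u_{01}, v_{10}, v_{11}\}$.

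The third BFS, from $e''' = u_0 u_{01}$, then yields the contradiction. The above constraint collapses $N_2(e''')$ to $\{v, u_1, v_{01}, v_{10}, v_{11}\}$ of size $5$---the two vertices of $S(u_{01})$ share non-$u_{01}$ neighbours $v_{10}, v_{11}$, and $v$ is reached both from $u$ and from $v_0$---so $|N_{\le 2}(e''')| = 11$. Counting structural edges inside $N_{\le 2}(e''')$ gives at least $13$ ($1$ inside $N_0$, $4$ from $N_0$ to $N_1$, none inside $N_1$, and $8$ from $N_1$ to $N_2$), so the same degree-sum bound now yields $|E(G)| \le 3\cdot 11 - 13 = 20 < 23$, contradicting $|E(G)| = 24$. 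The principal obstacle is the verification in the second step: the rigid structure from the first BFS is essential to rule out alternative configurations in which the four outer vertices of $V\setminus N_{\le 2}(e')$ could remain pairwise non-adjacent.
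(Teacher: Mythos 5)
Your argument is correct, but it reaches the contradiction by a genuinely different route than the paper. Both proofs open with the same breadth-first count from the $4$-cycle edge $e$, giving $|E(G)|\le 24$. The paper then shows directly that this bound cannot be attained: in the tight tree-like structure, the four edges from the two grandchildren of the off-cycle neighbour of $u$ down to $N_3(e)$ must each lie within distance $3$ (in $L(G)$) of the opposite $4$-cycle edge, which forces too many neighbours onto a single $N_2(e)$-vertex; combined with divisibility by $3$ from Claim~\ref{clm:regular} this finishes the case in one stroke. You instead invoke divisibility first to pin $|E(G)|=24$, extract the full rigid structure ($|V(G)|=16$, no edges inside $N_2(e)$ or $N_3(e)$, a $(2,3)$-biregular bipartite graph between $N_2(e)$ and the four vertices of $N_3(e)$), and then run the same counting lemma twice more: the BFS from $uu_0$ forces $S(u_{01})=S(v_{10})=S(v_{11})$ via independence of the four leftover vertices, and the BFS from $u_0u_{01}$ then collapses $N_{\le 2}$ to only $11$ vertices carrying $13$ internal edges, yielding $|E(G)|\le 20$, a clean numerical contradiction. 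I checked the three counts and the two deductions and they all hold. Your route is longer but purely mechanical --- every step is an instance of $|E(G)|\le \sum_{x\in N_{\le 2}(e')}\deg(x)-|E(N_{\le 2}(e'))|$ --- whereas the paper's is shorter but needs one ad hoc distance computation. One presentational point: when you assert that ``equality must hold'' after the first BFS, you should note (as the paper does with its ``we may assume'' sentence) that any coincidence among the candidate $N_2(e)$-vertices, or any extra edge inside $N_{\le 2}(e)$, strictly decreases the quantity $3|N_{\le 2}(e)|-|E(N_{\le 2}(e))|$ below $24$; the two bounds you combine are not independent, and this routine verification is what licenses the rigid structure that your second and third BFS rely on.
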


\begin{claimproof}
If the edge $e$ is in a $4$-cycle, then without loss of generality suppose $u_1 =v_{00}$, $v_0=u_{11}$, and so on. Already $|E|=|N_{L(G)^3}[e]| \le 4+2\cdot7+2\cdot3=24$ and by Claim~\ref{clm:regular} we have a contradiction if we can show that $|E|$ is $1$ lower. So we may assume that $u_0$, $u_1$, $v_0$, $v_1$, $u_{00}$, $u_{01}$, $u_{10}$, $v_{01}$, $v_{10}$, $v_{11}$ are all distinct vertices and that the vertices $u_{00k}$, $u_{01k}$, $u_{10k}$, $v_{01k}$, $v_{10k}$, $v_{11k}$ (possibly not all distinct) are all at distance exactly $3$ from $e$.

Consider the edges $u_{00}u_{000}$, $u_{00}u_{001}$, $u_{01}u_{010}$ and $u_{01}u_{011}$. They are within distance $3$ (in $L(G)$) from $vv_0$, so $u_{000}$, $u_{001}$, $u_{010}$ and $u_{011}$ all need to be adjacent to $v_{01}$, leading to a contradiction as $\deg{v_{01}}\le 3.$
\end{claimproof}

\begin{claim}\label{clm:C5}
$G$ contains no $5$-cycle.
\end{claim}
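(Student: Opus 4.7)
The plan is to argue by contradiction. Assume $G$ contains a 5-cycle; since $L(G)^3$ is a clique we may choose the root edge $e=uv$ to lie on this 5-cycle, so that WLOG it has the form $u-v-v_0-w-u_0-u$, forcing the identification $w := u_{00} = v_{00}$. By Claims~\ref{clm:triangle}--\ref{clm:C4} we have that $G$ is 3-regular, triangle-free, and $C_4$-free. Set $N_i := N_i(e)$ and let $m_{ij}$ denote the number of edges between $N_i$ and $N_j$. A BFS from $e$ together with the $L(G)^3$-clique condition (every edge lies at $G$-distance at most $2$ from $\{u,v\}$) yields $|N_1|=4$ and $m_{11}=0$, $|N_2|\le 7$ (with equality iff $w$ is the only identification at level 2), and $m_{33}=0$.

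A layer-by-layer degree count using 3-regularity gives $m_{01}=4$, $m_{12}=8$, $m_{23}=3|N_3|$, and
\[
|E(G)| \;=\; 5 + 3|N_2| - m_{22},\qquad 3|N_3| \;=\; 3|N_2| - 8 - 2m_{22}.
\]
Combining this with $|E(G)|\ge 23$, Claim~\ref{clm:regular} (so $3\mid|E(G)|$), $|N_2|\le 7$, and integrality of $|N_3|$ leaves exactly one admissible configuration: $(|N_2|,m_{22},|N_3|,|E(G)|)=(7,2,3,24)$.

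The remainder of the proof rules out this single configuration by using the $L(G)^3$-clique condition with the four auxiliary reference edges $uu_0,uu_1,vv_0,vv_1$. For every edge $xy$ with $x\in N_3$ and $y$ on the ``$v$-side'' of $N_2$ (i.e., $y\in\{v_{01},v_{10},v_{11}\}$), using $d_G(x,u),d_G(x,v)\ge 3$ the constraints reduce to: (i) $x$ or $y$ is adjacent to $u_{01}$, or else $x=w':=N(w)\setminus\{u_0,v_0\}$ (from $uu_0$); and (ii) $x$ or $y$ is adjacent to some vertex in $\{u_{10},u_{11}\}$ (from $uu_1$). Mirror constraints (i'), (ii') (with roles of $u,v$ swapped) must hold for $y$ on the ``$u$-side''.

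A short case analysis on whether $w'$ lies in $N_2$ or in $N_3$, and on the placement of the two intra-$N_2$ edges, then shows that in each sub-case some level-3 edge fails at least one of these coverings. The main obstacle is this final case analysis: $m_{22}=2$ is so tight relative to the four constraint families that several candidate sub-cases must be eliminated one by one, each by observing that some $N_3$ vertex $x$ has an $N_2$ neighbour $y$ for which neither $x$ nor $y$ is adjacent to the required vertex.
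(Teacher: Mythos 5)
Your setup is sound and matches the paper's: after normalising the $5$-cycle so that it identifies one level-$2$ vertex ($w=u_{00}=v_{00}$ in your labelling, $u_{11}=v_{00}$ in the paper's), your layer-by-layer handshake identities correctly force the unique configuration $|N_2|=7$, exactly two edges inside $N_2$, $|N_3|=3$, $|E|=24$ --- the same point the paper reaches by bounding $|N_{L(G)^3}[e]|\le 5+2\cdot 7+2\cdot 3+1=26$ and invoking divisibility by $3$. Up to here the difference is only bookkeeping.

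The problem is that everything after that is a promissory note, and that is where all the work lies. The paper's proof of this claim spends most of its length eliminating this single configuration through a fully explicit case analysis (first on whether the fifth cycle vertex has a neighbour in $N_2(e)$, then on which intra-$N_2$ adjacencies can occur), and it repeatedly needs clique constraints between pairs of edges that are \emph{both} deep in the BFS tree --- e.g.\ an intra-$N_2$ edge tested against two level-$2$-to-$3$ edges, or a level-$2$-to-$3$ edge tested against $u_1u_{10}$ and $v_0v_{01}$ --- not only constraints against the four reference edges $uu_0,uu_1,vv_0,vv_1$ that you propose. You assert that ``a short case analysis \dots shows that in each sub-case some level-3 edge fails at least one of these coverings,'' but you neither enumerate the sub-cases nor verify a single one, and you yourself flag this as ``the main obstacle.'' As written this is a gap, not a proof. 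Two further concerns: your constraint (i) is incompletely stated (the option that $y$ itself is adjacent to $w$, i.e.\ that $yw$ is one of the two intra-$N_2$ edges, also satisfies the $uu_0$ requirement and is missing), and it is not established that your restricted constraint families suffice to reach a contradiction at all --- the paper's argument suggests they do not, since it leans on edge pairs outside your list. You need to actually carry out, and very likely enlarge the constraint set for, the final case analysis.
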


\begin{claimproof}
If the edge $e$ is in a $5$-cycle, then without loss of generality suppose $u_{11} =v_{00}$, $u_{111}=v_{000}$ and so on. Already $|E|=|N_{L(G)^3}[e]| \le 5+2\cdot 7+ 2\cdot 3+1= 26$. 
 Since $26 \ge |E|\geq 23$ and $G$ is $3$-regular by Claim~\ref{clm:regular}, it follows that $|E|=24$ and $|V|=   \frac{2 |E|}{3} = 16$.

Note first that $N_2(e)=\{u_{00},u_{01},u_{10},u_{11},v_{01},v_{10},v_{11}\}$ are all distinct vertices or else $|E|$ is already at most $23$. 
Thus $|N_3(e)|=16-13=3$ and so (again using $3$-regularity, and also the fact that $N_3(e)$ must be an independent set) there are exactly $2$ edges in the subgraph induced by $N_2(e)$.

We divide our considerations into two cases. First, we assume $u_{11}$ has a neighbour in $N_2(e).$ By Claim~\ref{clm:triangle}, without loss of generality we can assume that this neighbour is $u_{00}$ and thus $u_{111} =v_{000}=u_{00}$, $u_{11}=u_{001}$ and so on.
Since $N_2(e)$ induces two edges, we can assume that $v_{100}, v_{110} \in N_3(e)$.
Note that the edge $u_{00}u_{11}$ is within distance $3$ (in $L(G)$) of both $v_{10}v_{100}$ and $v_{11}v_{110}$. 
It cannot be that $u_{000}$ is equal to $v_{10}$ or $v_{11}$ or else one of the edges $v_{10}v_{100}$ and $v_{11}v_{110}$ remains too far from $u_{00}u_{11}$ (taking Claim~\ref{clm:C4} into account). 
At this point, we note that $v_{10}$ and $v_{11}$ both need to be adjacent to $u_{000}$, creating a $C_4$ and hence leading to a contradiction.

Second, since we are not in the first case, $u_{111}=v_{000}$ must be at distance exactly $3$ from $e$. 
The vertex $u_{111}$ must have all of its $3$ neighbours in $N_2(e)$, one of which is $u_{11}$. Keeping in mind that there is no four-cycle, 
 we can therefore assume without loss of generality that $u_{111}=u_{000}=v_{111}$.

Let us consider as a subcase the possibility that $u_{01}$ and $v_{11}$ are adjacent (the case $v_{10}$ and $u_{00}$ being adjacent, is done in exactly the same way), say, $u_{010}=v_{11}$. Note that the edge $u_{01}v_{11}$ is within distance $3$ (in $L(G)$) of both $u_1u_{10}$ and $v_0v_{01}$. Since $v_{11}$ has all its neighbours already fixed (and keeping in mind that $N_2(e)$ induces only one edge other than $u_{01}v_{11}$), it can only be that $u_{01},u_{10}$ and $v_{01}$ have a common neighbour in $N_3(e)$.
So without loss of generality, $u_{011}=u_{100}=v_{010}$. But now, with only the free placement of $u_{001}$, the only possibility to have $u_{00}u_{000}$ within distance $3$ (in $L(G)$) of both edges $u_{10}u_{100}$ and $v_{01}v_{010}$, is if $u_{001}$ is equal to $u_{10}$ or $v_{01}$.
But then we have already determined the two edges induced by $N_2(e)$, none of which is incident to $v_{10}$, so that both $v_{100}$ and $v_{101}$ must be in $N_3(e)$, leading to the contradiction that $|N_3(e)|=|\left\{u_{100},u_{111},v_{100},v_{101} \right\}| \geq 4$.

We have thus shown that $u_{01}v_{11}$ and $v_{10}u_{00}$ are not present as an edge.

Let $i\in \left\{0,1\right\}$. The vertex $u_{01}$ is not adjacent to any vertex in $\{u_{00},u_1,v_0,v_{11}\}$ and so $u_{01i}$ has to be adjacent to one of them to ensure that $u_{11}u_{111}$ is within distance $3$ (in $L(G)$) of $u_{01}u_{01i}$

This implies $u_{01i}$ has to be equal to $v_{110}$, $u_{10}$, $v_{01}$ or $v_{10}$ 
 (taking Claims~\ref{clm:triangle} and~\ref{clm:C4} into account).
Incidentally, $u_{01i}$ can also not be equal to $v_{10}$, because in order for $u_{01}v_{10}$ to be within distance $3$ of $u_{01}u_{01i}$, we would need an edge between $\left\{u_{01},v_{10}  \right\}$ and $\left\{u_{00},v_{11}\right\}$, which would either create a triangle or an edge that we already showed to be not present.

So $u_{01i}$ has to be equal to $v_{110}$, $u_{10}$ or $v_{01}$, and symmetrically $v_{10i}$ equals $u_{001}, u_{10}$  or $v_{01}$, for all $i\in \left\{0,1\right\}$. 
As there are only two edges in the graph induced by $N_2(e)$, and both $u_{01}$ and $v_{10}$ are an endvertex of one of them,
we may conclude without loss of generality that $u_{010}=v_{110}$ and $v_{101}=u_{001}$.
Note that the edge $v_{11}v_{110}$ is within distance $3$ (in $L(G)$) of $uu_1$, and so $u_{10}v_{110}$ must be an edge. But then the distance between $u_{01}v_{110}$ or $u_{10}v_{110}$ and $vv_0$ is at least $4$, a contradiction.
\end{claimproof}

By the above claims, it only remains to consider $G$ being $3$-regular and of girth at least $6$. Let $e\in E$ be arbitrary. Then we have $|E|=|N_{L(G)^3}[e]| \le 29$ and by Claim~\ref{clm:regular} we have a contradiction if we can show that $|E|$ is $6$ lower.
Since $|E|\geq 23$ and $G$ is $3$-regular, we know  $|V|=\left \lceil  \frac{2 |E|}{3} \right \rceil \geq 16$, hence there are at least $16 - (2+4+8) = 2$ vertices at distance $3$ from $e$.
Let $x$ and $y$ be vertices at distance $3$ from $e$. We may assume without loss of generality that $x$ is adjacent to $u_{00}$, $u_{10}$, and $v_{00}$. Since the edge $vv_1$ is within distance $3$ (in $L(G)$) of both edges $u_{00}x$ and $u_{10}x$, it follows (without loss of generality) that $u_{00}v_{10}$ and $u_{10}v_{11}$ are edges.

Since $y$ must satisfy similar constraints as $x$, and it cannot be adjacent to $u_{00}$ nor to $u_{10}$, there will be at least three edges between vertices in $N_2(e)$ and so similarly as before, we know that  $|V|=\left \lfloor\frac{2|E|}{3}\right \rfloor \leq \left \lfloor\frac{2 \cdot (29-3)}{3}\right \rfloor =17$. Because every $3$-regular graph has an even number of vertices, it follows that $|V|=16$, so that in fact $x$ and $y$ are the only vertices in $N_{3}(e)$. From this and $3$-regularity, we conclude that the subgraph induced by $N_2(e)$ must have exactly $5$ edges.
Since every edge between $2$ vertices in $N_2(e)$ will be between some $u_{ij}$ and a $v_{k\ell}$ and $G$ is $3$-regular, we know that $y$ is adjacent to exactly one of $u_{01}$ and $u_{11}$, wlog $u_{11}$.
Similarly the two neighbours of $y$ of the form $v_{ij}$ are not a neighbour of $x$ and so $N(y)$ and $N(x)$ are disjoint.
In particular $y$ cannot be adjacent to $v_{00}$ and so it has to be adjacent to $v_{01}.$
The last neighbour of $y$ is either $v_{10}$ or $v_{11}$. If it is $v_{11}$, then to ensure that $uu_0$ is within distance $3$ of both $yv_{01}$ and $yv_{11}$, we would need $u_{01}$ to be a neighbour of both $v_{10}$ and $v_{11}$, creating a four-cycle; contradiction. Thus the neighbours of $y$ must be $u_{11}, v_{01}$ and $v_{10}$.

So to ensure this, $u_{01}v_{01}$ is an edge as well.

We have now determined the whole graph, apart from two edges between $\left\{u_{01},u_{11}\right\}$ and $\left\{v_{00},v_{11} \right\}$. However, the edge $v_{00}u_{11}$ would create the five-cycle $xu_{10}u_1u_{11}v_{00}$, while the edge $v_{00}u_{01}$ would yield the four-cycle $u_{01}v_{00}v_{0}v_{01}$. So in both cases, we get a contradiction, from which we conclude.
\end{proof}

A brief inspection of the proof in Claim~\ref{clm:regular} yields that the extremal graph has exactly one vertex of degree $2$ and $14$ vertices of degree $3$.
Let the vertex of degree $2$ be $w$. Let its two neighbours be $u$ and $v$ and then $u_i,v_i,u_{ij},v_{ij}$ for $i,j \in \{0,1\}$ are defined as before.
Noting that every $u_{ij}$ has two neighbours of the form $v_{0k}$ and $v_{1 \ell}$ where $k, \ell \in \{0,1\}$, one can check that there is a unique extremal example with respect to Theorem~\ref{thm:cubic}, namely, the point--line incidence graph of the Fano plane, in which exactly one edge is subdivided.

\subsection*{Acknowledgement}

We are grateful to the anonymous referees for their helpful comments and suggestions.

\bibliographystyle{abbrv}
\bibliography{edgediam}

\end{document}